\newtheorem{theorem}{Theorem}[section]
\theoremstyle{plain}\newtheorem*{MainT}{Theorem}
\theoremstyle{plain}\newtheorem{Le}{Лемма}[section]
\theoremstyle{plain}
\theoremstyle{definition}
\theoremstyle{plain}
\theoremstyle{plain}
\theoremstyle{plain}\newtheorem{Cor}[Le]{Corollary}
\theoremstyle{plain}
\theoremstyle{plain}
\theoremstyle{plain}
\theoremstyle{plain}
\theoremstyle{plain}
\theoremstyle{plain}
\numberwithin{equation}{section}
\newcommand{\R}{\mathbb{R}}
\newtheorem*{theorem*}{Theorem}{\bf}{\it}
\newtheorem*{proposition*}{Утверждение}{\bf}{\it}
\newtheorem*{observation*}{Наблюдение}{\bf}{\it}
\newtheorem*{lemma*}{Лемма}{\bf}{\it}
\theoremstyle{definition}
\theoremstyle{remark}
\newcommand{\BMO}{\mathrm{BMO}}
\newcommand{\Bel}{\mathbb{B}}
\newcommand{\eps}{\varepsilon}
\newcommand{\BG}{\mathcal{B}}
\author{Egor Dobronravov}
\title{A sharp symmetric integral form of the John--Nirenberg inequality\thanks{Supported by the Russian Science Foundation grant 19-71-10023}}
\begin{document}
\maketitle
\begin{abstract}
We find sharp constants in the symmetric integral form of the John--Nirenberg inequality. The result is based upon computation of a new interesting Bellman function.
\end{abstract}

\section{Introduction.}
The aim of this paper is to provide a sharp form of the John--Nirenberg inequality for functions of bounded mean oscillation of a single variable and to describe an interesting Bellman function related to this problem. Before we formulate the result, let us introduce the notation and describe the already known sharp versions of the John--Nirenberg inequality. 

For an interval $I$ and a function $\varphi\in L^1(I)$, let $\langle\varphi\rangle_I$ be the average value of $\varphi$ over $I$, that is $\langle\varphi\rangle_I=\frac{1}{|I|}\int_{I}\varphi$. Recall the definition of the~$\BMO$-(semi-)norm:
\begin{equation*}
\|\varphi\|_{\BMO(I)}^2
=
\sup\left\{
\left\langle\left(\varphi-\langle\varphi\rangle_J\right)^2\right\rangle_J\big|\,
J \text{ is a subinterval of } I
\right\}
,\qquad \varphi \in L^2(I).
\end{equation*}
The functions with finite~$\BMO(I)$-norm form the space~$\BMO(I)$. Let also
\begin{equation*}
\BMO_{\varepsilon}(I)
=
\left\{
\varphi\in \BMO(I)\big|\,\|\varphi\|_{\BMO}\leqslant\varepsilon
\right\},\qquad \eps > 0,
\end{equation*}
be the~$\eps$-ball of the~$\BMO$-space.
The fundamental John–Nirenberg inequality introduced in~\cite{r4} says that a~$\BMO$ function is exponentially integrable. There are several equivalent ways to express this principle. 

\begin{MainT}[Weak form]
There exist $c_1>0$ and $c_2>0$ such that the inequality holds
\begin{equation}
\frac{1}{|I|}\left|\left\{
s\in I\big|\,\varphi(s)-\langle\varphi\rangle_I\geqslant\lambda
\right\}\right|
\leqslant
c_1 \exp\left(\frac{-c_2\lambda}{\|\varphi\|_{\BMO}}\right)
\quad\text{for any }\varphi\in\BMO(I)
.
\end{equation}
for any $\lambda>0$.
\end{MainT}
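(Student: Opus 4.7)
The plan is to use a Calder\'on--Zygmund stopping-time argument to derive a self-improving recursive estimate for the tail distribution and then iterate it. After rescaling $\varphi\mapsto\varphi/\|\varphi\|_{\BMO}$ and translating $\varphi\mapsto\varphi-\langle\varphi\rangle_I$, it suffices to prove that the quantity
\[
F(\lambda) \;:=\; \sup_{I,\varphi}\frac{1}{|I|}\bigl|\{s\in I:\varphi(s)>\lambda\}\bigr|,
\]
where the supremum ranges over all intervals $I$ and all $\varphi$ with $\|\varphi\|_{\BMO(I)}\leqslant 1$ and $\langle\varphi\rangle_I=0$, decays exponentially in $\lambda$.

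Fix a threshold $b>1$ to be chosen below. I would then perform a dyadic stopping-time procedure on $I$: bisect $I$, and for each half $J$ with $\langle\varphi\rangle_J>b$ declare $J$ a stopping interval, otherwise bisect $J$ further. This produces a pairwise disjoint family $\{J_k\}$ of subintervals with $\langle\varphi\rangle_{J_k}>b$. Chebyshev together with the mean-zero condition and Cauchy--Schwarz yields
\[
b\sum_k|J_k| \;<\; \sum_k\int_{J_k}\varphi \;\leqslant\; \int_I\varphi^+ \;=\; \tfrac12\int_I|\varphi| \;\leqslant\; \tfrac12|I|,
\]
so $\sum_k|J_k|\leqslant|I|/(2b)$. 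The parent interval $J_k'$ of $J_k$ satisfies $\langle\varphi\rangle_{J_k'}\leqslant b$ by maximality, and dyadic doubling combined with Cauchy--Schwarz gives
\[
\langle\varphi\rangle_{J_k}-\langle\varphi\rangle_{J_k'} \;\leqslant\; \bigl\langle|\varphi-\langle\varphi\rangle_{J_k'}|\bigr\rangle_{J_k} \;\leqslant\; 2\bigl\langle|\varphi-\langle\varphi\rangle_{J_k'}|\bigr\rangle_{J_k'} \;\leqslant\; 2,
\]
so $\langle\varphi\rangle_{J_k}\leqslant b+2$. Outside $\bigcup_k J_k$ Lebesgue differentiation forces $\varphi\leqslant b$ almost everywhere.

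For $\lambda>b+2$ the set $\{\varphi>\lambda\}$ lies up to a null set inside $\bigcup_k J_k$, and on each $J_k$ the condition $\varphi>\lambda$ translates into $\varphi-\langle\varphi\rangle_{J_k}>\lambda-(b+2)$ for a mean-zero function on $J_k$ with $\BMO(J_k)$-norm at most $1$. By the definition of $F$ this yields the recursive estimate
\[
F(\lambda) \;\leqslant\; \frac{1}{2b}\,F\bigl(\lambda-(b+2)\bigr),
\]
and iterating with, say, $b=e$ gives $F(\lambda)\leqslant c_1\exp(-c_2\lambda)$ with explicit (non-sharp) $c_1,c_2>0$.

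The main obstacle is the stopping-time bookkeeping: one has to ensure that the shift $\langle\varphi\rangle_{J_k}-\langle\varphi\rangle_{J_k'}$ is controlled purely by $\|\varphi\|_{\BMO}$ through dyadic doubling and Cauchy--Schwarz, and that after subtracting the local mean on each $J_k$ the function still belongs to the class used to define $F$. This classical argument produces the weak form with non-sharp constants; the sharp $c_1,c_2$ announced in the abstract will be obtained via the Bellman-function approach developed later in the paper.
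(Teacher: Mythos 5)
Your argument is correct, and it is the classical Calder\'on--Zygmund proof of the weak John--Nirenberg inequality; the only adaptation to this paper's setting is the (correctly executed) use of Cauchy--Schwarz to pass from the $L^2$-based oscillation bound to the $L^1$ oscillation bounds needed for the stopping time, giving the jump $\langle\varphi\rangle_{J_k}\leqslant b+2$ across a stopping interval and the packing bound $\sum_k|J_k|\leqslant |I|/(2b)$. The recursion $F(\lambda)\leqslant (2b)^{-1}F(\lambda-(b+2))$ together with the trivial bound $F\leqslant 1$ then does give exponential decay for all $\lambda>0$ after adjusting $c_1$. Note, however, that the paper does not prove this statement at all: it is quoted as background, with the qualitative form attributed to John--Nirenberg~\cite{r4} and the sharp constants ($c_2=1$, $c_1=e/2$) read off from the Bellman function ${\bf B}_{\max}$ computed by Vasyunin and Volberg~\cite{r2}. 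So your stopping-time argument is a genuinely different (and self-contained, elementary) route that yields only non-sharp constants of the form $c_2=\log(2b)/(b+2)$, whereas the Bellman-function approach the paper relies on produces the extremal constants but requires solving the associated extremal problem on the parabolic strip. Two cosmetic points worth tightening if you write this up: the statement uses $\{\varphi-\langle\varphi\rangle_I\geqslant\lambda\}$ rather than strict inequality (harmless, fixed by monotonicity of $F$ and a shift of $c_1$), and you should record explicitly that $F\leqslant 1$ and that $F$ is non-increasing so that the iteration terminates at the base case $0<\lambda\leqslant b+2$.
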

Here and in what follows $|E|$ denotes the Lebesque measure of $E$, $E\subseteq\mathbb{R}$.
\begin{MainT}[Weak symmetric form]
There exist $c_1>0$ and $c_2>0$ such that the inequality holds
\begin{equation}
\frac{1}{|I|}\left|\left\{
s\in I\big|\,|\varphi(s)-\langle\varphi\rangle_I|\geqslant\lambda
\right\}\right|
\leqslant
c_1 \exp\left(\frac{-c_2\lambda}{\|\varphi\|_{\BMO}}\right)
\quad\text{for any }\varphi\in\BMO(I)
.
\end{equation}
for any $\lambda>0$.
\end{MainT}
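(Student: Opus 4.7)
The plan is to obtain the symmetric form as an immediate corollary of the asymmetric weak form already stated, exploiting the invariance of the $\BMO$-seminorm under the substitution $\varphi\mapsto-\varphi$.

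First I would record two elementary observations. Since the defining quantity $(\varphi-\langle\varphi\rangle_J)^2$ is unchanged when $\varphi$ is replaced by $-\varphi$ (and $\langle -\varphi\rangle_J=-\langle\varphi\rangle_J$), we have $\|{-\varphi}\|_{\BMO(I)}=\|\varphi\|_{\BMO(I)}$. Moreover, the symmetric super-level set decomposes as
\begin{equation*}
\bigl\{s\in I:|\varphi(s)-\langle\varphi\rangle_I|\geqslant\lambda\bigr\}
=
\bigl\{s\in I:\varphi(s)-\langle\varphi\rangle_I\geqslant\lambda\bigr\}
\cup
\bigl\{s\in I:(-\varphi)(s)-\langle -\varphi\rangle_I\geqslant\lambda\bigr\}.
\end{equation*}

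Next I would apply the asymmetric weak form twice, once to $\varphi$ and once to $-\varphi$, obtaining the same bound $c_1\exp\bigl(-c_2\lambda/\|\varphi\|_{\BMO}\bigr)$ for each piece by the seminorm invariance. A union bound then produces the symmetric estimate with constants $(2c_1,c_2)$; renaming $2c_1\to c_1$ completes the proof.

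At this qualitative level there is essentially no obstacle: the argument is a one-line symmetry-plus-union-bound reduction, and the loss of the factor $2$ in the constant is harmless because the statement asks only for the \emph{existence} of some constants. The genuine difficulty is postponed to the sharp symmetric results the paper develops, where such a cheap doubling would destroy optimality and one must instead analyse the symmetric extremal problem directly via the Bellman function announced in the abstract.
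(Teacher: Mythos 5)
Your argument is correct: $\|-\varphi\|_{\BMO}=\|\varphi\|_{\BMO}$ follows immediately from the defining formula, the two-sided level set splits into two one-sided ones for $\varphi$ and $-\varphi$, and the union bound yields the estimate with $(2c_1,c_2)$. The paper does not actually prove this statement --- all four formulations of the John--Nirenberg inequality are presented as known background (citing John--Nirenberg~\cite{r4} for the qualitative result and Vasyunin--Volberg~\cite{r2} for the sharp weak-form constants) --- so there is no paper proof to compare against, but your reduction is the standard one and is sound. Your closing remark is also on point: the cheap doubling loses sharpness in general, and indeed the paper's $C_{sym}(\varepsilon,\lambda)$ in the mid-range $\varepsilon\leqslant\lambda\leqslant 2\varepsilon$ is $\varepsilon^2/\lambda^2$, which is not twice the asymmetric bound $\tfrac{e}{2}e^{-\lambda/\varepsilon}$, so the sharp symmetric problem genuinely requires the separate Bellman-function analysis carried out in~\cite{r2} and, for the integral form, in the present paper.
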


\begin{MainT}[Integral form]
There exists $\varepsilon_0>0$ such that for every $0\leqslant\varepsilon<\varepsilon_0$ there is $C(\varepsilon)>0$ such that for any function $\varphi\in\BMO_{\varepsilon}(I)$
\begin{equation}
\langle e^{\varphi}\rangle_I\leqslant C(\varepsilon)e^{\langle\varphi\rangle_I}
,
\end{equation}
or, equivalently,
\begin{equation}
\langle \exp(\varphi-\langle\varphi\rangle_I)\rangle_I\leqslant C(\varepsilon)
.
\end{equation}
\end{MainT}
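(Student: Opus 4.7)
My plan is to derive the integral form from the one-sided weak form by a layer-cake argument. Write $\psi := \varphi - \langle\varphi\rangle_I$ and let $\mu_I(E) := |E|/|I|$ denote the normalised measure on $I$. Since $\mu_I$ is a probability measure, the layer-cake identity gives
\[
\langle e^{\psi}\rangle_I = \int_0^{\infty} \mu_I\!\bigl(\{s\in I : e^{\psi(s)} > t\}\bigr)\, dt = \int_{-\infty}^{\infty} e^{\lambda}\, \mu_I\!\bigl(\{\psi > \lambda\}\bigr)\, d\lambda,
\]
after the substitution $t = e^{\lambda}$. I would then split the outer integral at $\lambda = 0$.

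For the half $\lambda \leqslant 0$ I would simply estimate $\mu_I \leqslant 1$, so this part contributes at most $\int_{-\infty}^{0} e^{\lambda}\, d\lambda = 1$. For the half $\lambda > 0$ I would invoke the weak form: since $\|\varphi\|_{\BMO}\leqslant\varepsilon$,
\[
\mu_I(\{\psi > \lambda\}) \leqslant c_1 \exp\!\bigl(-c_2 \lambda/\varepsilon\bigr),
\]
so that the contribution is at most $c_1 \int_0^{\infty} \exp\!\bigl(\lambda(1 - c_2/\varepsilon)\bigr)\, d\lambda$. This integral converges precisely when $\varepsilon < c_2$. Taking $\varepsilon_0 := c_2$ yields the admissible bound $C(\varepsilon) \leqslant 1 + c_1\varepsilon/(c_2 - \varepsilon)$, which also automatically justifies the integrability of $e^{\varphi}$ in the first place.

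The calculational part of this argument is completely routine; the real work has already been absorbed into the weak form, whose constants $c_1, c_2$ typically come from a Calderón--Zygmund stopping-time argument on dyadic subintervals of $I$. In the context of the present paper I expect the main obstacle to lie elsewhere: the layer-cake route cannot produce the \emph{sharp} value of $\varepsilon_0$, since any slackness in the exponent $c_2$ is inherited by the admissible range of $\varepsilon$, and similarly the linear-in-$\varepsilon$ behaviour of the bound $C(\varepsilon)$ above is almost certainly not optimal. Recovering the sharp threshold and the sharp constant function $C(\varepsilon)$ is presumably exactly where the Bellman-function computation announced in the abstract becomes indispensable.
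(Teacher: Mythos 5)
Your layer-cake argument is correct, and it is the standard elementary route from the one-sided weak form to the exponential-integral form: the substitution $t=e^{\lambda}$, the split at $\lambda=0$, and the convergence criterion $\varepsilon<c_2$ are all sound, and the resulting bound $C(\varepsilon)\leqslant 1+c_1\varepsilon/(c_2-\varepsilon)$ is valid. Be aware, though, that the paper does not actually supply a proof of this statement. The four ``forms'' of the John--Nirenberg inequality listed in the introduction are given as classical background; the sharp version of the integral form --- $\varepsilon_0=1$ and $C(\varepsilon)=e^{-\varepsilon}/(1-\varepsilon)$ --- is quoted from Slavin and Vasyunin~\cite{r3}, where it is obtained by computing the Bellman function $B_{\varepsilon}^{asym}$ of~\eqref{nonsymbel}. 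The paper's own new work concerns the \emph{symmetric} integral form, Theorem~\ref{Theorem}.

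Your self-critique is mostly on target, with one small overstatement. The paper records that the sharp exponent in the weak form is $c_2=1$, so feeding that into your estimate does produce the threshold $\varepsilon_0\geqslant 1$; there is no slackness to inherit. What the layer-cake route genuinely misses is (i) the unattainability side, i.e.\ that the conclusion actually fails for $\varepsilon\geqslant 1$, and (ii) the sharp constant $C(\varepsilon)$. For (ii) the obstruction is exactly what you intuit: the sharp distributional bound $C(\varepsilon,\lambda)$ is realised by a different extremal function at each level $\lambda$, so integrating the pointwise-sharp weak bound over $\lambda$ still overshoots the sharp $C(\varepsilon)$ (one can check that even the exact $C(\varepsilon,\lambda)$ from the paper integrates to something strictly larger than $e^{-\varepsilon}/(1-\varepsilon)$). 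The Bellman approach sidesteps this by optimising against a single test function simultaneously at all levels, which is precisely what the layer-cake decoupling cannot do.
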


\begin{MainT}[Integral symmetric form]
There exists $\varepsilon_0>0$ such that for every $0\leqslant\varepsilon<\varepsilon_0$ there is $C(\varepsilon)>0$ such that for any function $\varphi\in\BMO_{\varepsilon}(I)$,
\begin{equation}
\langle \exp|\varphi-\langle\varphi\rangle_I|\rangle_I\leqslant C(\varepsilon)
.
\end{equation}
\end{MainT}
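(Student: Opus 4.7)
The plan is to reduce the symmetric integral statement to the (non-symmetric) integral form that appears as the previous theorem, since the statement here is only the qualitative existence of $C(\varepsilon)$ and not yet a sharp constant.

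The first step is to notice the sign symmetry of the $\BMO$ norm: replacing $\varphi$ by $-\varphi$ preserves $\varphi-\langle\varphi\rangle_J$ up to a sign, so $(\varphi-\langle\varphi\rangle_J)^2=(-\varphi-\langle -\varphi\rangle_J)^2$, and hence $\|-\varphi\|_{\BMO(I)}=\|\varphi\|_{\BMO(I)}$. In particular, if $\varphi\in\BMO_{\varepsilon}(I)$ then $-\varphi\in\BMO_{\varepsilon}(I)$ as well.

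The second step is the elementary pointwise bound $e^{|t|}\leqslant e^{t}+e^{-t}$ for $t\in\R$, applied to $t=\varphi(s)-\langle\varphi\rangle_I$. Averaging over $I$ yields
\begin{equation*}
\langle \exp|\varphi-\langle\varphi\rangle_I|\rangle_I
\leqslant
\langle \exp(\varphi-\langle\varphi\rangle_I)\rangle_I
+
\langle \exp(-\varphi+\langle\varphi\rangle_I)\rangle_I
.
\end{equation*}
Now I apply the Integral form theorem (with the same threshold $\varepsilon_0$) to $\varphi$ and to $-\varphi$, using $\langle -\varphi\rangle_I=-\langle\varphi\rangle_I$; both functions lie in $\BMO_{\varepsilon}(I)$ by the first step. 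This bounds each of the two averages above by $C(\varepsilon)$ and therefore gives the desired estimate with constant $2C(\varepsilon)$.

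There is no real obstacle to the qualitative statement in this form: once the non-symmetric integral version is available, the symmetric one is immediate by the sign-invariance of the $\BMO$ seminorm plus the $e^{|t|}\leqslant e^{t}+e^{-t}$ trick. The genuine difficulty, and what presumably occupies the rest of the paper, is that the constant $2C(\varepsilon)$ produced by this soft argument is far from sharp: extracting the sharp $C(\varepsilon)$ requires identifying the correct Bellman function $\Bel(x_1,x_2)$ associated to the boundary data $\exp|\varphi-\langle\varphi\rangle_I|$ on the parabolic strip $x_2-x_1^2\leqslant\varepsilon^{2}$, solving the corresponding homogeneous Monge--Ampère problem, and matching the resulting extremal foliation against concrete $\BMO$ test functions.
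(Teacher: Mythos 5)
Your proposal is correct, and it proves the qualitative statement by a genuinely different and more elementary route than the paper takes. The paper never gives a separate soft argument for this theorem: it states it as classical background and then obtains it as a consequence of computing the full Bellman function $\Bel_{\varepsilon}$ (Theorem~2.2) and evaluating it at $(0,\varepsilon^2)$, which yields the sharp threshold $\varepsilon_0=1$ and the sharp constant $C(\varepsilon)=e^{\varepsilon}$ for $\varepsilon\leqslant\tfrac12$ and $C(\varepsilon)=\tfrac{e^{1-\varepsilon}}{2-2\varepsilon}$ for $\tfrac12\leqslant\varepsilon<1$. Your argument, by contrast, reduces to the non-symmetric integral form via the sign-invariance $\|-\varphi\|_{\BMO}=\|\varphi\|_{\BMO}$ and the pointwise bound $e^{|t|}\leqslant e^{t}+e^{-t}$, and is essentially two lines. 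It recovers the correct threshold $\varepsilon_0=1$ (since that of the non-symmetric form is $1$), but, as you correctly observe, the constant $2C_{asym}(\varepsilon)=\tfrac{2e^{-\varepsilon}}{1-\varepsilon}$ it produces is strictly larger than the true sharp constant for every $\varepsilon\in(0,1)$ (e.g.\ roughly a factor of $2$ near $\varepsilon=0$ and $4/e$ as $\varepsilon\to1^-$). So the trade-off is exactly as you describe: your reduction is the efficient way to prove the qualitative statement, while the paper's Bellman-function machinery — built for the non-smooth cost $f(t)=e^{|t|}$, which is precisely why it falls outside the general smooth results of the cited references — is what is needed to replace $2C_{asym}(\varepsilon)$ by the sharp value.
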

These four theorems are equivalent on the qualitative level. However, if one is interested in sharp values of the parameters in these inequalities, then the equivalence is at least non-trivial (most likely non-existent). 
The sharp forms of the first two theorems and the Bellman functions 
\begin{equation}
{\bf B}_{\max}(x_1,\,x_2,\,\lambda,\,\varepsilon)
=
\frac{1}{|I|}
\sup\left\{
\left|\left\{
s\in I\big|\,\varphi(s)\geqslant\lambda
\right\}\right|\colon \varphi\in\BMO_{\varepsilon}(I),\,\langle \varphi\rangle=x_1,\,\langle \varphi^2\rangle=x_2
\right\}
;
\end{equation}
\begin{equation}
{\bf B}(x_1,\,x_2,\,\lambda,\,\varepsilon)
=
\frac{1}{|I|}
\sup\left\{
\left|\left\{
s\in I\big|\,|\varphi(s)|\geqslant\lambda
\right\}\right|\colon \varphi\in\BMO_{\varepsilon}(I),\,\langle \varphi\rangle=x_1,\,\langle \varphi^2\rangle=x_2
\right\}
.
\end{equation}
were calculated in~\cite{r2}.
So, for $\varphi\in\BMO_{\varepsilon}$ we can write
\begin{equation}
\frac{1}{|I|}\left|\left\{
s\in I\big|\,\varphi(s)-\langle\varphi\rangle_I\geqslant\lambda
\right\}\right|
\leqslant
C(\varepsilon,\,\lambda)
;
\end{equation}
\begin{equation}
\frac{1}{|I|}\left|\left\{
s\in I\big|\,|\varphi(s)-\langle\varphi\rangle_I|\geqslant\lambda
\right\}\right|
\leqslant
C_{sym}(\varepsilon,\,\lambda)
,
\end{equation}
where the sharp value, of $C_{\max}$ and $C$ are calculated from the corresponding Bellman functions in the following way
\begin{equation}
C(\varepsilon,\,\lambda)
=
\sup
\left\{
{\bf B}_{\max}(0,\,x_2,\,\lambda,\,\varepsilon)\big|
x_2\in[0,\varepsilon^2]
\right\}
=
\begin{cases}
1-\frac{\lambda}{2\varepsilon},  & \quad 0\leqslant\lambda\leqslant\varepsilon,   \\
\frac{e}{2}e^{-\frac{\lambda}{\varepsilon}},&  \quad \varepsilon\leqslant\lambda;
\end{cases}
\end{equation}
\begin{equation}
C_{sym}(\varepsilon,\,\lambda)
=
\sup
\left\{
{\bf B}(0,\,x_2,\,\lambda,\,\varepsilon)\big|
x_2\in[0,\varepsilon^2]
\right\}
=
\begin{cases}
1,  & \quad 0\leqslant\lambda\leqslant\varepsilon,   \\
\frac{\varepsilon^2}{\lambda^2},  & \quad \varepsilon\leqslant\lambda\leqslant 2\varepsilon,   \\
\frac{e^2}{4}e^{-\frac{\lambda}{\varepsilon}},&  \quad 2\varepsilon\leqslant\lambda.
\end{cases}
\end{equation}
From these formulas it follows that the optimal $c_2$ for both first and second theorems is equal to $1$. If $c_2=1$, then the optimal $c_1$ is equal to $\frac{e}{2}$ and $e$ for the first and second theorems respectively. The sharp form of the third theorem and the corresponding Bellman function were calculated in~\cite{r3}.
More precisely, the Bellman function given by~\eqref{nonsymbel} was calculated in~\cite{r3}. It followed, that $\varepsilon_0$ is equal to 1, and the optimal $C(\varepsilon)$ for the third theorem is equal to $\frac{e^{-\varepsilon}}{1-\varepsilon}$.
In this paper, we will compute the sharp values of~$\eps_0$ and~$C(\eps)$ in the fourth theorem (see Theorem~\ref{Theorem} below).

We should note that our way to define the~$\BMO$-norm is not the most common one, at least in the field of real analysis. A version based on the~$L^1$ norm instead of~$L^2$ is more widespread. The two semi-norms are equivalent by the John--Nirenberg inequality. See the papers~\cite{Kor} and~\cite{Lerner} for information about sharp John--Nirenberg inequalities with the~$L^1$-based norm and~\cite{Slavin15} and~\cite{SlavinVasyunin} for the inequalities with the~$L^p$-based norms.

We refer the reader to~\cite{r12} and~\cite{r14} for the exposition of the Bellman function method and to the papers~\cite{r7} and~\cite{r6} for a general treatment of more specific $\BMO$-based Bellman function problems. 
\section{Main result and the Bellman setup.}
\begin{theorem}\label{Theorem}
Let $I\subset\mathbb{R}$ be an interval and $\varphi\in\BMO_{\varepsilon}(I)$. The inequality
\begin{equation}\label{equation21}
\langle
\exp |\varphi-\langle\varphi\rangle_I|
\rangle_I
\leqslant
\begin{cases}
e^{\varepsilon},  & \quad 0\leqslant\varepsilon\leqslant\frac{1}{2},   \\
\dfrac{e^{1-\varepsilon}}{2-2\varepsilon}, &\quad\frac{1}{2}\leqslant\varepsilon < 1,\\
+\infty,&  \quad\varepsilon \geqslant 1
\end{cases}
\end{equation}
holds true. It is sharp and attainable for all $\varepsilon\in [0,\,1)$.
\end{theorem}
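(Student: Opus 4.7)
I plan to apply the Bellman-function method developed for the analogous sharp statements in~\cite{r2,r3}. Define, for an interval $J$,
\begin{equation*}
\Bel_\eps(x_1, x_2) = \sup\bigl\{\langle e^{|\vf|}\rangle_J : \vf \in \BMO_\eps(J),\ \langle\vf\rangle_J = x_1,\ \langle\vf^2\rangle_J = x_2\bigr\},
\end{equation*}
on the strip $\Omega_\eps = \{(x_1, x_2) : x_1^2 \leq x_2 \leq x_1^2 + \eps^2\}$. By translation invariance of $\BMO$, Theorem~\ref{Theorem} reduces to the computation of $\sup_{x_2 \in [0, \eps^2]} \Bel_\eps(0, x_2)$, so my goal is to obtain an explicit formula for $\Bel_\eps$.

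\textbf{Structural properties and candidate.} On the lower parabolic boundary $\{x_2 = x_1^2\}$ only constant $\vf$ is admissible, so $\Bel_\eps(x, x^2) = e^{|x|}$. Splitting $J = J_- \cup J_+$ expresses $(\langle\vf\rangle_J, \langle\vf^2\rangle_J)$ as a convex combination of the corresponding points on the halves, so $\Bel_\eps$ is concave along every segment lying in $\Omega_\eps$. The candidate $B_\eps$ is the pointwise smallest function with these two properties; in the interior it should solve the homogeneous Monge--Amp\`ere equation $\det \mathrm{Hess}\, B = 0$ with a foliation by extremal chords, symmetric about the axis $x_1 = 0$ because of the cusp of the boundary datum. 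The case split in~\eqref{equation21} should correspond to a phase transition at $\eps = \tfrac12$: for $\eps \leq \tfrac12$ horizontal chords suffice and the extremizer is simply $\vf = \pm\eps$ on equal halves of $J$, giving $e^\eps$; for $\eps \in (\tfrac12, 1)$ the foliation has to bend upward and the extremizer should acquire a self-similar exponential profile akin to the one used for the non-symmetric integral form in~\cite{r3}, reflected to accommodate the absolute value.

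\textbf{Verification, sharpness, and blow-up.} Once $B_\eps$ is written down, the estimate $\langle e^{|\vf|}\rangle_J \leq B_\eps(\langle\vf\rangle_J, \langle\vf^2\rangle_J)$ follows by the standard dyadic Bellman induction: segment-concavity is iterated on a tree of subdivisions of $J$ and the boundary datum is applied in the limit where each block is essentially constant. Evaluating at $x_1 = 0$ and maximizing over $x_2 \in [0, \eps^2]$ yields~\eqref{equation21} for $\eps < 1$. Sharpness is obtained by reversing the construction: on each foliation chord the optimal $\vf$ is a two-piece concatenation, and iterating produces a sequence of piecewise constant functions in $\BMO_\eps$ whose exponential means converge to $B_\eps$. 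For $\eps \geq 1$ the divergence of the left-hand side will be witnessed by the classical logarithmic family $\vf(s) = -\log|s - s_0|$, which belongs to $\BMO$ with the appropriate semi-norm but is not $e^{|\vf|}$-integrable.

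\textbf{Main obstacle.} The genuine difficulty is the identification of $B_\eps$: finding the correct foliation and the analytic ansatz in each region, and in particular verifying concavity along chords that cross the axis $x_1 = 0$, where $B_\eps$ is only Lipschitz. It is precisely the failure of the naive symmetric ansatz to remain concave across the axis for $\eps > \tfrac12$ that should force the switch to the more elaborate formula and fix the threshold $\eps = \tfrac12$. Once the foliation and the gluing of analytic branches are settled, the remaining verification and extremal construction are routine adaptations of the Bellman machinery of~\cite{r2,r3,r7,r6}.
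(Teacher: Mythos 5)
Your strategy is essentially the one the paper follows: set up the Bellman function $\Bel_\eps$ on the parabolic strip $\Omega_\eps$, characterize it as the pointwise least locally concave majorant of $e^{|x_1|}$ on the lower parabola (the paper invokes the main theorem of~\cite{r5} for the equality $\Bel_\eps=\BG_\eps$ rather than running the dyadic induction from scratch), guess a candidate via a Monge--Amp\`ere foliation ansatz, verify local concavity, and exhibit explicit optimizers; the $\eps\geqslant1$ blow-up is indeed witnessed by a logarithmic function. The issue is that what you label the ``main obstacle'' is not a subsidiary detail but the bulk of the paper: the candidate $B_\eps$ in~\eqref{equation28} is a four-piece gluing for $\eps<\tfrac12$ and a three-piece one in~\eqref{equation2} for $\eps\geqslant\tfrac12$, and Sections~3--5 are devoted to continuity, $C^1$-smoothness across the seams, and the concavity of each piece, with the auxiliary parameters $\alpha$, $c$, $b$ pinned down precisely by the matching conditions. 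One correction to your heuristic for the threshold: for $\eps\in[\tfrac12,1)$ the extremal chord through $(0,\eps^2)$ is still horizontal---$B_\eps$ restricted to $\Omega_\eps^2$ depends only on $x_2$---so the foliation does not ``bend upward'' there; what changes at $\eps=\tfrac12$ is that this horizontal chord no longer terminates on the parabola (since $(1-\eps)^2<\eps^2$) but on the slanted chords inherited from the asymmetric Bellman function of~\cite{r3}, which is what produces the reflected logarithmic optimizer and the value $e^{1-\eps}/(2-2\eps)$.
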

Using the techniques of~\cite{StolyarovZatitskiy}, one may transfer this sharp inequality to the line. Let~$\varphi$ be a locally square summable function on the line, let
\begin{equation*}
\|\varphi\|_{\BMO(\R)}
=
\sup\left\{
\left(\left\langle\left(\varphi-\langle\varphi\rangle_J\right)^2\right\rangle_J\right)^{\frac{1}{2}}\big|\,
J \text{ is an interval } 
\right\}.
\end{equation*}
For $\varepsilon\geqslant 0$, let~$\BMO_{\varepsilon}(\R)
=
\left\{
\varphi\in \BMO(\mathbb{R})\big|\,\|\varphi\|_{\BMO}\leqslant\varepsilon
\right\}$.
\begin{Cor}
Let $\varphi\in\BMO_{\varepsilon}(\R)$. The inequality~\eqref{equation21} holds true. It is sharp for all $\varepsilon\in [0,\,1)$.
\end{Cor}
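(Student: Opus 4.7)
The forward direction of the inequality is immediate from Theorem~\ref{Theorem}. Indeed, if $\varphi\in\BMO_\varepsilon(\R)$ and $I\subset\R$ is any bounded interval, then every subinterval $J\subseteq I$ is also an interval of $\R$, so
\[
\|\varphi\|_{\BMO(I)}^2
=
\sup_{J\subseteq I}\bigl\langle(\varphi-\langle\varphi\rangle_J)^2\bigr\rangle_J
\leqslant
\|\varphi\|_{\BMO(\R)}^2
\leqslant
\varepsilon^2.
\]
Hence the restriction of $\varphi$ to $I$ belongs to $\BMO_\varepsilon(I)$, and applying Theorem~\ref{Theorem} on $I$ gives~\eqref{equation21}; crucially, the right-hand side depends only on $\varepsilon$, not on $I$.

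For sharpness I would invert this observation. Theorem~\ref{Theorem} supplies, for each $\varepsilon\in[0,1)$, an extremizer $\varphi_\varepsilon\in\BMO_\varepsilon([0,1])$ that saturates~\eqref{equation21} on $[0,1]$; its explicit shape is read off from the Bellman construction behind Theorem~\ref{Theorem}. The plan is to extend $\varphi_\varepsilon$ to some $\psi\in\BMO_\varepsilon(\R)$ with $\psi\equiv\varphi_\varepsilon$ on $[0,1]$; evaluating~\eqref{equation21} on $I=[0,1]$ for such a $\psi$ then already produces equality. Following the transfer scheme of~\cite{StolyarovZatitskiy}, the natural candidate is to extend by the one-sided limits, setting $\psi\equiv\varphi_\varepsilon(0^+)$ on $(-\infty,0]$ and $\psi\equiv\varphi_\varepsilon(1^-)$ on $[1,\infty)$, with a short affine buffer if needed to avoid creating a boundary jump.

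The verification $\|\psi\|_{\BMO(\R)}\leqslant\varepsilon$ reduces to bounding $\langle(\psi-\langle\psi\rangle_J)^2\rangle_J$ on every interval $J\subset\R$. Intervals $J\subseteq[0,1]$ are controlled directly by $\|\varphi_\varepsilon\|_{\BMO([0,1])}\leqslant\varepsilon$; intervals disjoint from $[0,1]$ contribute zero variance since $\psi$ is constant there. The main obstacle is the straddling case, where $J$ captures both a piece of the oscillation of $\varphi_\varepsilon$ and a constant tail. Decomposing the variance of $\psi$ on such a $J$ into the interior variance of $\varphi_\varepsilon$ on $J\cap[0,1]$ plus a cross term measuring the gap between the one-sided limit and the partial interior average, one has to check that the total stays at most~$\varepsilon^2$. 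This is precisely the estimate produced by the technique in~\cite{StolyarovZatitskiy}: the straddling variance is organised as a convex combination of quantities already bounded by $\varepsilon^2$, using that the constant tail has been set to the correct one-sided limit. With this $\psi$ in place, the inequality on $I=[0,1]$ is saturated, proving sharpness on the line.
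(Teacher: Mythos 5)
Your forward direction is correct and is the natural argument: restriction to a bounded $I$ does not increase the $\BMO$-norm, so Theorem~\ref{Theorem} applies verbatim and the bound is independent of $I$.

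The sharpness half has a genuine gap. Your plan — extend the interval extremizer $\varphi_\varepsilon$ to the line by the one-sided limits $\varphi_\varepsilon(0^+)$, $\varphi_\varepsilon(1^-)$ — is not defined in precisely the range of $\eps$ where the answer is interesting. For $\eps\in[\tfrac12,1)$ the extremizer for the corner point $(0,\eps^2)$ (written out at the end of Section~\ref{attainability}) has a logarithmic blow-up at \emph{both} endpoints of $[0,1]$: $\varphi_\eps(t)\to-\infty$ as $t\to 0^+$ and $\varphi_\eps(t)\to+\infty$ as $t\to 1^-$. There are no finite one-sided limits to extend by, and a ``short affine buffer'' cannot absorb an infinite jump. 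The same issue arises for the optimizers in $\Omega_\eps^3$ and $\Omega_\eps^4$ in the case $\eps<\tfrac12$, which are unbounded at one endpoint. Only for the corner optimizer with $\eps<\tfrac12$ (a two-valued step function) does your construction go through literally, and there the $\BMO(\R)$ bound follows because a two-valued function always has variance at most $(\max-\min)^2/4$.

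Even where the extension makes sense, the straddling-interval estimate is asserted rather than proved: nothing in the argument shows why the variance on an interval $J$ overlapping a constant tail is a ``convex combination of quantities already bounded by $\varepsilon^2$''. Extension by a constant does \emph{not} in general preserve the $\BMO$-norm, and the transference in~\cite{StolyarovZatitskiy} does not proceed by this naive extension; it establishes an abstract identity between the interval and line Bellman functions, which is exactly the black box the paper invokes without re-proving. To repair the argument one should either (a) appeal directly to the transference theorem of~\cite{StolyarovZatitskiy} (the paper's intent), or (b) give an explicit approximation argument: truncate or rescale the extremizer so that the resulting function is bounded, constant near both endpoints of $[0,1]$, and has $\BMO([0,1])$-norm $\le\eps$, then verify the extension is in $\BMO_\eps(\R)$ and that the integral quantity converges to the sharp constant as the truncation parameter is removed. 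Note also that the corollary claims only sharpness, not attainability, so an approximation argument suffices and is the safer route.
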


Theorem~\ref{Theorem} will follow from the formula for a specific Bellman function we will present below, see~\eqref{equation28} and Theorem~\ref{Theorem1}. 
Let $P=\left\{(x_1,x_2)\big|\,x_2=x_1^2\right\}$ be the standard parabola, let $\Omega_{\varepsilon}=\left\{(x_1,x_2)\big|\,x_1^2\leqslant x_2\leqslant x_1^2+\varepsilon^2\right\}$ be the parabolic strip of width $\varepsilon^2$. We will consider summable functions that map~$I$ to~$P$. Let
\begin{equation*}
U_{\varepsilon}=
\left\{
\varphi\in L^1(I,\,P)\big|\, \forall \text{ interval }J\subseteq I\ \ \langle\varphi\rangle_J\in\Omega_{\varepsilon}
\right\}
.
\end{equation*}
Then the mapping $T\colon L^2(I,\,\mathbb{R})\to L^1(I,\,P)\ $ given by the formula $\varphi\mapsto \left(\varphi,\,\varphi^2\right)$ defines a bijection between $\BMO_{\varepsilon}$ and $U_{\varepsilon}$. We define the Bellman function~$\Bel_{\varepsilon}\colon \Omega_{\varepsilon}\to \mathbb{R}\cup\{+\infty\}$ by the formula
\begin{equation}\label{BellmanDefinition}
\Bel_{\varepsilon}(x_1,\,x_2)
=
\sup\left\{
\left\langle e^{|\varphi|}\right\rangle_I\big|\, \varphi\in\BMO_{\varepsilon}(I),\,\langle\varphi\rangle_I=x_1,\,\left\langle\varphi^2\right\rangle_I=x_2
\right\}
.
\end{equation}
Our aim is to find a good analytic expression for $\Bel_{\varepsilon}$. The papers~\cite{r7} and~\cite{r6} provide a formula for a more general function
\begin{equation}\label{BellmanDefinition}
\Bel_{f,\varepsilon}(x_1,\,x_2)
=
\sup\left\{
\left\langle f(\varphi)\right\rangle_I\big|\, \varphi\in\BMO_{\varepsilon}(I),\,\langle\varphi\rangle_I=x_1,\,\left\langle\varphi^2\right\rangle_I=x_2
\right\},
\end{equation}
where~$f$ is an arbitrary~$C^{3}$-smooth function satisfying additional regularity assumptions. Our Theorem~\ref{Theorem1} below is not covered by these results since in our case~$f(t) = e^{|t|}$, and this function is far from being~$C^3$ or even~$C^2$ smooth (the exact smoothness condition in the cited papers is slightly less than~$C^3$, but definitely stronger than~$C^2$). In fact, the non-smoothness of the cost function $f$ will lead to some unexpected effects. What is more, the papers~\cite{r7} and~\cite{r6} do not provide a simple formula for the Bellman function. They suggest an algorithm for composing such a formula. Our cost function allows several shortcuts in the general algorithm that are also interesting in themselves. 

We say that a function with the domain $V\subseteq\mathbb{R}^d$ is locally concave if its restriction to any segment that lies in $V$, is concave. We can also define the class of locally concave functions
\begin{equation*}
\Lambda_{\varepsilon}
=
\left\{
B\colon\Omega_{\varepsilon}\to\mathbb{R}\cup\{+\infty\}
\big|\, 
B\text{ is locally concave and }B\left(x_1,\,x_1^2\right)\geqslant e^{|x_1|}, \ x_1 \in \mathbb{R}
\right\}
\end{equation*}
and the pointwise minimal function~$\BG_{\varepsilon}\colon \Omega_{\varepsilon}\to \mathbb{R}\cup\{+\infty\}$,
\begin{equation*}
\BG_{\varepsilon}(x_1,\,x_2)
=
\inf\left\{
B(x_1,\,x_2)
\big|\,
B\in\Lambda_{\varepsilon}
\right\}
.
\end{equation*}
One may see that $\BG_{\varepsilon}\in\Lambda_{\varepsilon}$. According to the main theorem of~\cite{r5}, $\Bel_{\varepsilon}=\BG_{\varepsilon}$. To describe~$\Bel_\eps$, we will need the auxiliary parameters $\beta,\,\alpha,\,c,\,b$:
\begin{equation}\label{beta}
\beta=\sqrt{\varepsilon^2+x_1^2-x_2},
\end{equation}
\begin{equation}\label{alpha}
\alpha=\varepsilon+\frac{\varepsilon}{1+\varepsilon}\left(\text{log}(1-\varepsilon)-\text{log}\left(2\varepsilon^2\right)\right),
\end{equation}
\begin{equation}\label{c}
c=\frac{1-\alpha}{1-\varepsilon^2}e^{\alpha}-\frac{\varepsilon+\alpha}{2\varepsilon(1+\varepsilon)}\exp\left\{-\frac{1}{\varepsilon}(\alpha-\varepsilon)+\varepsilon\right\},
\end{equation}
\begin{equation}\label{d}
b=\frac{1}{2\left(1-\varepsilon^2\right)}e^{\alpha}+\frac{1}{4\varepsilon(1+\varepsilon)}\exp\left\{-\frac{1}{\varepsilon}(\alpha-\varepsilon)+\varepsilon\right\}.
\end{equation}

For $0\leqslant\varepsilon<\frac{1}{2}$, we split the domain $\Omega_{\varepsilon}$ into four parts (see Fig.~\ref{r7})$\colon$
\begin{equation*}
\Omega_{\varepsilon}^1=\left\{(x_1,x_2)\in\Omega_{\varepsilon}\big|\, x_2\leqslant\varepsilon^2\right\},
\end{equation*}
\begin{equation*}
\Omega_{\varepsilon}^2
=
\big\{(x_1,x_2)\in\Omega_{\varepsilon}\big|
 x_2\in\left[\varepsilon^2,\,\alpha^2\right]\text{ and }x_2\leqslant 2\alpha|x_1|-\alpha^2-2\varepsilon(|x_1|-\alpha)\text{ when }|x_1|>\alpha-\varepsilon\big\}
,
\end{equation*}
\begin{equation*}
\Omega_{\varepsilon}^3=\left\{(x_1,x_2)\in\Omega_{\varepsilon}\big|\,|x_1|\in[\alpha-\varepsilon,\,\alpha+\varepsilon],\,x_2\geqslant2\alpha|x_1|-\alpha^2+\left|2\varepsilon(|x_1|-\alpha)\right|\right\},
\end{equation*}
\begin{equation*}
\Omega_{\varepsilon}^4=\left\{(x_1,x_2)\in\Omega_{\varepsilon}\big|\,|x_1|\geqslant\alpha\text{ and }x_2\leqslant 2(\alpha+\varepsilon)|x_1|-\alpha^2-2\varepsilon\alpha\text{ when }|x_1|<\alpha+\varepsilon\right\},
\end{equation*}
and define the candidate $B_{\varepsilon}$ by the formula$\colon$
\begin{equation}\label{equation28}
B_{\varepsilon}(x_1,\,x_2)
=
\begin{cases}
e^{\sqrt{x_2}}, &\text{for }(x_1,\,x_2)\in\Omega_{\varepsilon}^1,   \\
\frac{1+\beta}{1+\varepsilon}\exp\{|x_1|-\beta+\varepsilon\}+\frac{\varepsilon-\beta}{1+\varepsilon}\exp\left\{-\frac{1}{\varepsilon}(|x_1|-\beta)+\varepsilon\right\}, & \text{for }(x_1,\,x_2)\in\Omega_{\varepsilon}^2,   \\
c(|x_1|-\alpha)+b\left(x_2-\alpha^2\right)+e^{\alpha}, & \text{for } (x_1,\,x_2)\in\Omega_{\varepsilon}^3, \\   
\frac{1-\beta}{1-\varepsilon}\exp\{|x_1|+\beta-\varepsilon\}, & \text{for }(x_1,\,x_2)\in\Omega_{\varepsilon}^4.
\end{cases}
\end{equation} 
\begin{figure}[h]
\center{\includegraphics[scale=0.45]{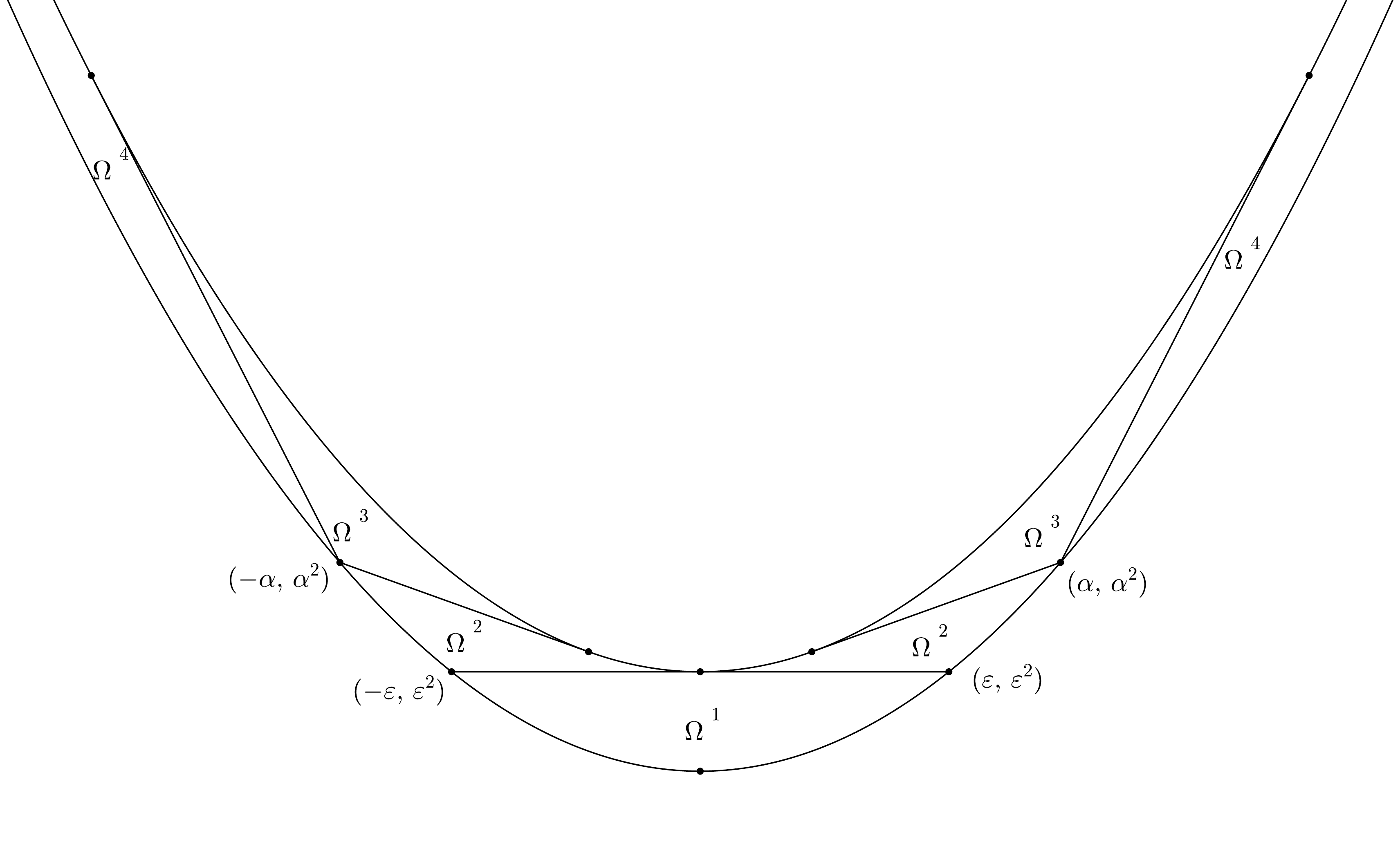}}
\caption{Foliation in the case $\varepsilon\in\left[0,\,\frac{1}{2}\right)$.}\label{r7}
\end{figure}
For $\frac{1}{2}\leqslant\varepsilon<1$,  we split the domain $\Omega_{\varepsilon}$ into three parts (see Fig.~\ref{r11})$\colon$
\begin{equation*}
\Omega_{\varepsilon}^1=\left\{(x_1,\,x_2)\in\Omega_{\varepsilon}\big|\,x_2\leqslant(1-\varepsilon)^2\right\},
\end{equation*}
\begin{equation*}
\Omega_{\varepsilon}^2=\left\{(x_1,\,x_2)\in\Omega_{\varepsilon}\big|\,x_2\in\left[(1-\varepsilon)^2,\,1+\varepsilon^2\right]\text{ and }|x_1|\leqslant\frac{1+x_2-\varepsilon^2}{2}\right\},
\end{equation*}
\begin{equation*}
\Omega_{\varepsilon}^3=\left\{(x_1,\,x_2)\in\Omega_{\varepsilon}\big|\,x_2\geqslant(1-\varepsilon)^2\text{ and }|x_1|\geqslant\frac{1+x_2-\varepsilon^2}{2}\text{ when }x_2<1+\varepsilon^2\right\},
\end{equation*}
and define the candidate $B_{\varepsilon}$ by the formula$\colon$
\begin{equation}\label{equation2}
B_{\varepsilon}(x_1,\,x_2)=
\begin{cases}
e^{\sqrt{x_2}},& \text{for }(x_1,\,x_2)\in\Omega_{\varepsilon}^1,   \\
\frac{1-\varepsilon^2+x_2}{2-2\varepsilon}e^{1-\varepsilon},  &\text{for }(x_1,\,x_2)\in\Omega_{\varepsilon}^2,   \\
\frac{1-\beta}{1-\varepsilon}\exp\left(x_1+\beta-\varepsilon\right),& \text{for }(x_1,\,x_2)\in\Omega_{\varepsilon}^3.
\end{cases}
\end{equation}
\begin{figure}[h]
\center{\includegraphics[scale=0.5]{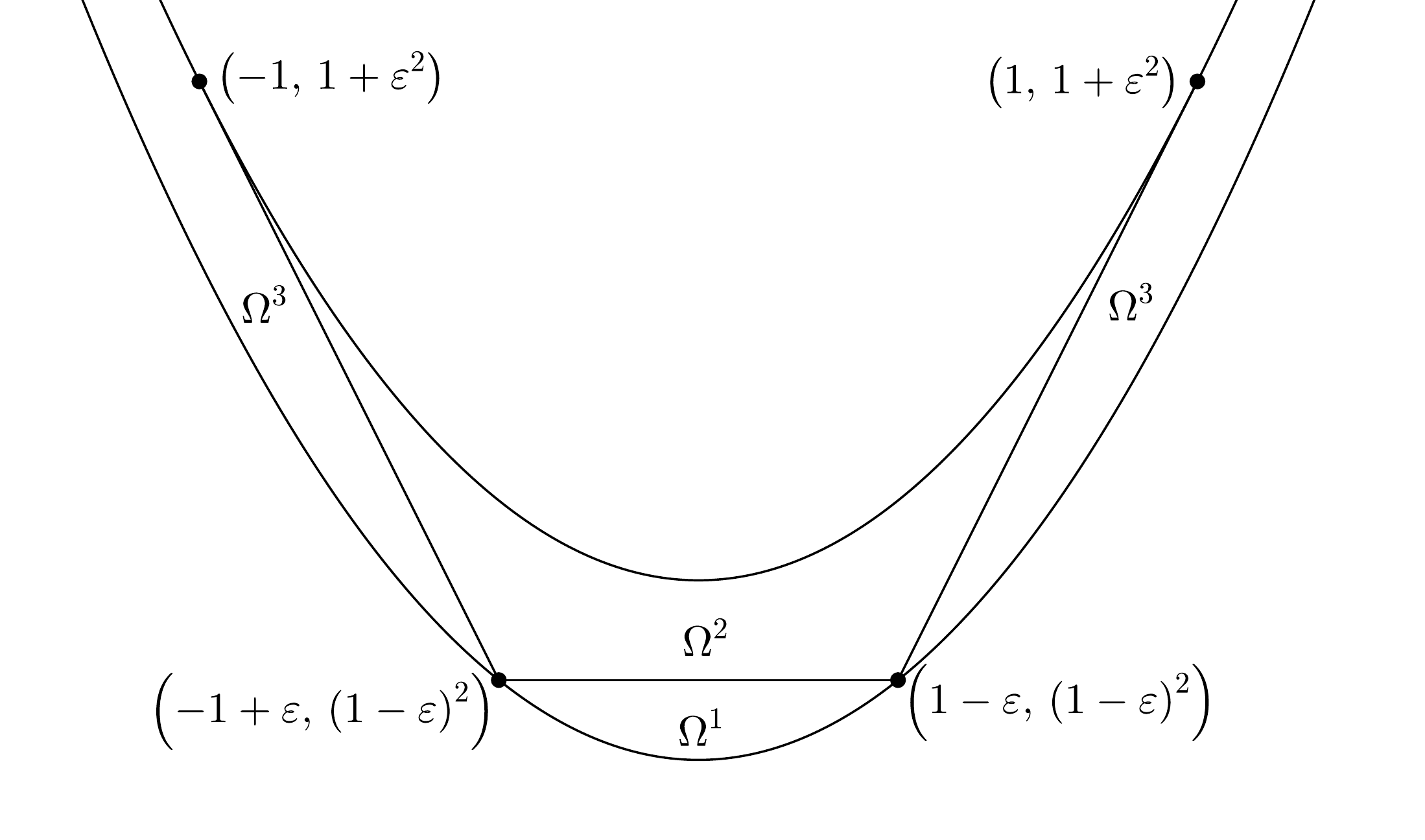}}
\caption{Foliation in the case $\varepsilon\in\left[\frac{1}{2},\,1\right)$.}\label{r11}
\end{figure}

Finally for $\varepsilon\geqslant 1$ we set$\colon$
\begin{equation}
B_{\varepsilon}(x_1,\,x_2)=
\begin{cases}
e^{|x_1|},& \text{for }x_2=x_1^2,   \\
+\infty,&\text{otherwise.}   
\end{cases}
\end{equation}
\begin{theorem}\label{Theorem1}
For $\varepsilon\geqslant 0$, we have $\Bel_{\varepsilon}=B_{\varepsilon}$. Moreover, for all $\varepsilon\geqslant 0$ and $(x_1,\,x_2)\in\Omega_{\varepsilon}$ there exists a function $\varphi\in\BMO_{\varepsilon}$ such that $\langle\varphi\rangle_I=x_1,\,\left\langle\varphi^2\right\rangle_I=x_2$, and $\langle\exp|\varphi|\rangle_I=\Bel_{\varepsilon}(x_1,\,x_2)$.
\end{theorem}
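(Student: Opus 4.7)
The plan is to prove the two directions $\Bel_\eps \leq B_\eps$ and $\Bel_\eps \geq B_\eps$ separately, leveraging the identity $\Bel_\eps = \BG_\eps$ cited from \cite{r5}. For the upper bound it suffices to show that the explicit candidate $B_\eps$ belongs to $\Lambda_\eps$; for the lower bound I would produce, for every $(x_1,x_2)\in\Omega_\eps$, an explicit extremal function, which simultaneously proves the attainability claim.

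For $B_\eps \in \Lambda_\eps$ I would proceed piecewise. On each region $\Omega_\eps^i$ (four in the regime $\eps<\tfrac12$, three in the regime $\tfrac12\leq\eps<1$) the candidate is either affine or has an explicit closed form along the two natural coordinates suggested by the foliations on Fig.~\ref{r7} and Fig.~\ref{r11}: a ``chord direction'' along which $B_\eps$ is affine and a transverse direction along which it is strictly concave. Local concavity on each piece is then either trivial (the affine parts $\Omega_\eps^3$ of the first regime and $\Omega_\eps^2$ of the second) or a direct Hessian computation. Next I would check $C^1$-matching along the shared boundaries of the pieces. The very definitions of $\alpha,b,c$ in~\eqref{alpha}--\eqref{d} are tuned so that the values and one-sided gradients of $B_\eps$ agree at the interfaces between $\Omega_\eps^2,\Omega_\eps^3,\Omega_\eps^4$; the same holds automatically at the interface $x_2=\eps^2$ between $\Omega_\eps^1$ and its neighbours. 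Once $C^1$-matching is verified across each smooth interface, local concavity of the full $B_\eps$ follows from the piecewise concavities together with the standard $C^1$-gluing criterion. Finally, the boundary inequality $B_\eps(x_1,x_1^2)\geq e^{|x_1|}$ reduces to equality: on the parabola $\beta = \eps$, and with this substitution each piece that meets $P$ collapses to $e^{|x_1|}$.

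For the lower bound $B_\eps \leq \Bel_\eps$ I would realize the candidate as $\langle e^{|\varphi|}\rangle_I$ of an explicit extremal $\varphi$ built by concatenation along the foliation chords. In $\Omega_\eps^1$ the chord is the horizontal segment $[(-\sqrt{x_2},x_2),(\sqrt{x_2},x_2)]$, whose endpoints lie on $P$, producing a two-valued function of value $e^{\sqrt{x_2}}$. In $\Omega_\eps^2$ and $\Omega_\eps^4$ the chords run between $P$ and the upper edge $x_2 = x_1^2 + \eps^2$; one endpoint is realized by a constant, while at the other endpoint the construction is iterated, producing a self-similar exponential cascade that, summed, reproduces the closed forms in~\eqref{equation28}. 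The linear region $\Omega_\eps^3$ (resp.\ $\Omega_\eps^2$ in the second regime) is filled by taking convex combinations of two extremals built on its boundary segments. A direct computation of $\langle e^{|\varphi|}\rangle_I$ for the concatenated $\varphi$ matches the announced values. The case $\eps\geq 1$ reduces to exhibiting $\BMO_\eps$-functions with arbitrarily large $\langle e^{|\varphi|}\rangle_I$, which is routine (e.g.\ via rescaled logarithms).

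The main obstacle I anticipate lies in the concavity/gluing step: although the parameters $\alpha,b,c$ are designed so that the matching is smooth, rigorously verifying transverse strict concavity on $\Omega_\eps^2$ and $\Omega_\eps^4$ (pieces that mix an exponential in $|x_1|$ with the radical $\beta$), and handling the non-smoothness of the cost function $e^{|x_1|}$ along $x_1=0$ inside $\Omega_\eps^1$, is delicate and is precisely the point where the general machinery of~\cite{r7} and~\cite{r6} does not apply. A closely related subtlety is the change of regime at $\eps=\tfrac12$: the triangular linear piece $\Omega_\eps^3$ of the first foliation degenerates and is replaced by the linear-in-$x_2$ slab $\Omega_\eps^2$ of the second, and one has to verify that the two constructions yield the same Bellman function in the limit, ruling out a spurious alternative candidate.
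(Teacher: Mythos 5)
Your proposal follows essentially the same route as the paper: invoke $\Bel_\eps=\BG_\eps$ from~\cite{r5}, establish the upper bound by proving $B_\eps\in\Lambda_\eps$ via piecewise concavity together with $C^1$-gluing across the interfaces (the paper does this in Sections~\ref{continuity}--\ref{concavity}, citing the gluing criterion from~\cite{r6}), and obtain the lower bound plus attainability by exhibiting extremal functions along the foliation chords and filling the linear region by convex combinations (Section~\ref{attainability}, using the explicit step-plus-logarithm test functions of~\cite{r1} and~\cite{r3}). The one small place where the paper is more concrete than your sketch is the $\Omega_\eps^2$ optimizer in the $\eps<\tfrac12$ regime: rather than an ``iterated cascade,'' it is a single four-piece function from Lemma~7.5 of~\cite{r1}, consisting of a two-valued step (realizing the horizontal chord of $\Omega_\eps^1$) glued to a logarithmic piece, and the value $\langle e^{|\varphi|}\rangle$ is computed directly from the closed formula~\eqref{formula}; this matters because the non-smoothness of $e^{|\cdot|}$ at the origin is precisely what forces the extremal to start by oscillating between $\pm\eps$. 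Also note that the $C^1$-matching is only asserted away from $(0,0)$, and the continuity of $B_\eps$ needs to be checked separately before the gluing criterion applies.
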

To prove Theorem~\ref{Theorem1}, we only need to construct the functions $\varphi\in\BMO_{\varepsilon}$ with corresponding means: $(\langle\varphi\rangle_I,\,\langle\varphi^2\rangle_I)=(x_1,\, x_2)$ such that $B_{\varepsilon}(x_1,\,x_2)=\left\langle e^{|\varphi|}\right\rangle_I$, and check the local concavity of $B_{\varepsilon}$. The construction of such a function $\varphi$ provides the inequality
\begin{equation*}
B_{\varepsilon}(x_1,\,x_2)
=
\left\langle e^{|\varphi|}\right\rangle_I
\leqslant
\sup\left\{\langle\exp\{ |\psi|\}\rangle_I\big|\,\psi\in\BMO_{\varepsilon}\text{ and }(\langle\psi\rangle_I,\,\langle\psi^2\rangle_I)=(x_1,\,x_2)\right\}
=
\Bel_{\varepsilon}(x_1,\,x_2)
.
\end{equation*}
If we check the local concavity of $B_{\varepsilon}$, then from $B_{\varepsilon}(x_1,\,x_1^2)=e^{|x_1|}$ we will have
\begin{equation*}
\Bel_{\varepsilon}(x_1,\,x_2)
=
\BG_{\varepsilon}(x_1,\,x_2)
=
\inf\left\{B(x_1,\,x_2)\big|\,B\in\Lambda_{\varepsilon}\right\}
\leqslant
B_{\varepsilon}(x_1,\,x_2)
.
\end{equation*}
Thus, we will obtain $B_{\varepsilon}=\Bel_{\varepsilon}$. We construct the desired extremal functions $\varphi$ in Section~\ref{attainability}, and check the local concavity in Section~\ref{concavity}. In Sections~\ref{continuity} and~\ref{differentiability}, we verify that $B_{\varepsilon}\in C(\Omega_{\varepsilon})$ and $B_{\varepsilon}\in C^1(\Omega_{\varepsilon}\setminus\{(0,\,0)\})$ respectively, these facts are used in Sections~\ref{concavity} and~\ref{attainability}.
\begin{proof}[Proof of Theorem~\ref{Theorem} based on Theorem~\ref{Theorem1}]
The following equality holds$\colon$
\begin{multline*}
\sup\left\{\exp\{ |\varphi-\langle\varphi\rangle_I|\}\big|\,\varphi\in\BMO_{\varepsilon}\right\}
=
\sup\left\{\exp\{ |\varphi|\}\big|\,\varphi\in\BMO_{\varepsilon}\text{ and }\langle\varphi\rangle_I=0\right\}
=
\\
=
\sup\left\{\Bel_{\varepsilon}(0,\,x_2)\big|\,x_2\in\left[0,\,\varepsilon^2\right]\right\}
=
\Bel_{\varepsilon}\left(0,\,\varepsilon^2\right)
=
\begin{cases}
e^{\varepsilon},  & \text{for } 0\leqslant\varepsilon\leqslant\frac{1}{2},   \\
\dfrac{e^{1-\varepsilon}}{2-2\varepsilon}, &\text{for }\frac{1}{2}\leqslant\varepsilon < 1,\\
+\infty,&  \text{for }\varepsilon \geqslant 1.
\end{cases}
\end{multline*}
The attainability follows from the existence of the function $\varphi\in\BMO_{\varepsilon}$ such that $\langle\varphi\rangle_I=0,\,\left\langle\varphi^2\right\rangle_I=\varepsilon^2$ and $\langle\exp\{|\varphi|\}\rangle_I=\Bel_{\varepsilon}\left(0,\,\varepsilon^2\right)$.
\end{proof}
Let also $B_{\varepsilon}^{asym}$ be the Bellman function for the non-symmetric integral John-Nirenberg inequality$\colon$
\begin{equation}\label{nonsymbel}
B_{\varepsilon}^{asym}(x_1,\,x_2)
=
\sup\left\{
\left\langle e^{\varphi}\right\rangle_I\big|\, \varphi\in\BMO_{\varepsilon}(I),\,\langle\varphi\rangle_I=x_1,\,\left\langle\varphi^2\right\rangle_I=x_2
\right\},\qquad x\in \Omega_\eps
.
\end{equation}
The function $B_{\varepsilon}^{asym}$ was calculated in~\cite{r3}, and it has the form

\begin{equation}\label{asymbel}
B_{\varepsilon}^{asym}(x_1,\,x_2)
=
\frac{1-\sqrt{\varepsilon^2+x_1^2-x_2}}{1-\varepsilon}\exp\left(x_1+\sqrt{\varepsilon^2+x_1^2-x_2}-\varepsilon\right)
,
\ \ \ \ 
\varepsilon\in [0,\,1),
\end{equation}
\begin{equation}
B_{\varepsilon}^{asym}(x_1,\,x_2)=
\begin{cases}
e^{x_1},& \text{for }x_2=x_1^2,   \\
+\infty,&\text{otherwise}   
\end{cases}
\ \ \ \ \varepsilon\in[1,\,+\infty).
\end{equation}

\section{Continuity of $B_{\varepsilon}$.}\label{continuity}
In the section, we will verify that $B_{\varepsilon}$ is continuous provided $\varepsilon\in(0,\,1)$. From~\eqref{equation28} and~\eqref{equation2} we see that $B_{\varepsilon}$ is a piecewise continuous function. We only need to check the continuity of gluing between $\Omega_{\varepsilon}^j$ and $\Omega_{\varepsilon}^{j+1}$. By symmetry we may assume that $x_1\geqslant 0$.
\subsection{The case $\varepsilon\in\left(0,\,\frac{1}{2}\right)$.}
Let us introduce the functions $g_1,\,g_2,\,g_3,\,g_4\colon$
\begin{equation}
g_1(x_1,\,x_2)=e^{\sqrt{x_2}},\ \ \ \ x=(x_1,\,x_2)\in\Omega_{\varepsilon}^1,
\end{equation}
\begin{equation}
g_2(x_1,\,x_2)=\frac{1+\beta}{1+\varepsilon}\exp\{|x_1|-\beta+\varepsilon\}+\frac{\varepsilon-\beta}{1+\varepsilon}\exp\left\{-\frac{1}{\varepsilon}(|x_1|-\beta)+\varepsilon\right\},\ \ \ \ x=(x_1,\,x_2)\in\Omega_{\varepsilon}^2,
\end{equation}
\begin{equation}
g_3(x_1,\,x_2)=c(|x_1|-\alpha)+b\left(x_2-\alpha^2\right)+e^{\alpha},\ \ \ \ x=(x_1,\,x_2)\in\Omega_{\varepsilon}^3,
\end{equation}
\begin{equation}
g_4(x_1,\,x_2)=\frac{1-\beta}{1-\varepsilon}\exp\{|x_1|+\beta-\varepsilon\},\ \ \ \ x=(x_1,\,x_2)\in\Omega_{\varepsilon}^4,
\end{equation}
where $\alpha,\,\beta,\,c,\,b$ are defined in~\eqref{alpha},~\eqref{beta},~\eqref{c},~\eqref{d} respectively. Then we only need to check that $g_j=g_{j+1}$ on the $\Omega_{\varepsilon}^j\cap\Omega_{\varepsilon}^{j+1}$.

\subsubsection{Continuity on $\Omega_{\varepsilon}^1\cap\Omega_{\varepsilon}^2$.}
The set $\Omega_{\varepsilon}^1\cap\Omega_{\varepsilon}^2$ is the segment $\left[\left(-\varepsilon,\,\varepsilon^2\right),\,\left(\varepsilon,\,\varepsilon^2\right)\right]$, see Fig.~\ref{r7}. On this segment, we have $\beta=|x_1|$, $x_2=\varepsilon^2$ hence
\begin{equation*}
g_1(x_1,\,x_2)
=
e^{\varepsilon}
=
g_2(x_1,\,x_2).
\end{equation*}
\subsubsection{Continuity on $\Omega_{\varepsilon}^2\cap\Omega_{\varepsilon}^3$.}
The set $\Omega_{\varepsilon}^2\cap\Omega_{\varepsilon}^3\cap\{x_1\geqslant 0\}$ is the segment 
\begin{equation}\label{seg}
\left[\left(\alpha-\varepsilon,\,(\alpha-\varepsilon)^2+\varepsilon^2\right),\,\left(\alpha,\,\alpha^2\right)\right].
\end{equation}
The identity $x_1-\beta=u-\varepsilon$ holds on the segment $\left[\left(u-\varepsilon,\,(u-\varepsilon)^2+\varepsilon^2\right),\,\left(u,\,u^2\right)\right]$ for any $u\geqslant\varepsilon$. Hence $g_2$ is linear on~\eqref{seg}, and $g_3$ is also linear on this segment. We need to verify the coincidence of $g_2$ and $g_3$ only at the endpoints. The endpoint $(\alpha,\,\alpha^2)$ (note that $\beta=\varepsilon$ at this point)$\colon$
\begin{equation*}
g_2\left(\alpha,\,\alpha^2\right)
=
\frac{1+\beta}{1+\varepsilon}\exp\{\alpha-\varepsilon+\varepsilon\}+\frac{\varepsilon-\beta}{1+\varepsilon}\exp\left\{-\frac{1}{\varepsilon}(\alpha-\varepsilon)+\varepsilon\right\}
=
e^{\alpha}
=
g_3\left(\alpha,\,\alpha^2\right)
.
\end{equation*}
Here the verification for the second endpoint ($\beta=0$ at this point) is$\colon$
\begin{equation*}
g_2\left(\alpha-\varepsilon,\,(\alpha-\varepsilon)^2+\varepsilon^2\right)
=
\frac{1}{1+\varepsilon}\exp\{\alpha\}+\frac{\varepsilon}{1+\varepsilon}\exp\left\{-\frac{1}{\varepsilon}(\alpha-\varepsilon)+\varepsilon\right\}
,
\end{equation*}
\begin{multline*}
g_3\left(\alpha-\varepsilon,\,(\alpha-\varepsilon)^2+\varepsilon^2\right)
=
-c\varepsilon+b\left(2\varepsilon^2-2\varepsilon\alpha\right)+e^{\alpha}
\overset{\eqref{c},\eqref{d}}{=}
\\
=
e^{\alpha}\left(1-\frac{\varepsilon(1-\alpha)}{1-\varepsilon^2}+\frac{\varepsilon^2-\varepsilon\alpha}{1-\varepsilon^2}\right)+
\exp\left(-\frac{1}{\varepsilon}(\alpha-\varepsilon)+\varepsilon\right)\left(\frac{\varepsilon+\alpha}{2(1+\varepsilon)}+\frac{\varepsilon-\alpha}{2(1+\varepsilon)}\right)
=
g_2\left(\alpha-\varepsilon,\,(\alpha-\varepsilon)^2+\varepsilon^2\right).
\end{multline*}

\subsubsection{Continuity on $\Omega_{\varepsilon}^3\cap\Omega_{\varepsilon}^4$.}
The set $\Omega_{\varepsilon}^3\cap\Omega_{\varepsilon}^4\cap\{x_1\geqslant 0\}$ is the segment 
\begin{equation}\label{seg2}
\left[\left(\alpha,\,\alpha^2\right),\,\left(\alpha+\varepsilon,\,(\alpha+\varepsilon)^2+\varepsilon^2\right)\right].
\end{equation}
The identity $x_1+\beta=u+\varepsilon$ holds on the segment $\left[\left(u,\,u^2\right),\,\left(u+\varepsilon,\,(u+\varepsilon)^2+\varepsilon^2\right)\right]$ for any $u\geqslant 0$. Hence $g_4$ is linear on~\eqref{seg2}, and $g_3$ is also linear in this segment. We need to check the coincidence of $g_3$ and $g_4$ only at the endpoints. The endpoint $(\alpha,\,\alpha^2)$ is simple$\colon$
\begin{equation*}
g_4\left(\alpha,\,\alpha^2\right)
=
\frac{1-\beta}{1-\varepsilon}\exp\{\alpha+\varepsilon-\varepsilon\}
=
e^{\alpha}
=
g_3\left(\alpha,\,\alpha^2\right)
.
\end{equation*}
Here the verification for the second endpoint is$\colon$
\begin{equation*}
g_4\left(\alpha+\varepsilon,\,(\alpha+\varepsilon)^2+\varepsilon^2\right)
=
\frac{1-\beta}{1-\varepsilon}\exp\{\alpha+\varepsilon-\varepsilon\}
=
\frac{1}{1-\varepsilon}e^{\alpha}
,
\end{equation*}
\begin{multline*}
g_3\left(\alpha+\varepsilon,\,(\alpha+\varepsilon)^2+\varepsilon^2\right)
=
c\varepsilon+b\left(2\varepsilon^2+2\varepsilon\alpha\right)+e^{\alpha}
=
\\
=
e^{\alpha}\left(1+\frac{\varepsilon(1-\alpha)}{1-\varepsilon^2}+\frac{\varepsilon^2+\varepsilon\alpha}{1-\varepsilon^2}\right)+
\exp\left(-\frac{1}{\varepsilon}(\alpha-\varepsilon)+\varepsilon\right)\left(-\frac{\varepsilon+\alpha}{2(1+\varepsilon)}+\frac{\varepsilon+\alpha}{2(1+\varepsilon)}\right)
=
g_4\left(\alpha+\varepsilon,\,(\alpha+\varepsilon)^2+\varepsilon^2\right).
\end{multline*}

\subsection{The case $\varepsilon\in\left[\frac{1}{2},\,1\right)$.}
Let us introduce the functions $g_1,\,g_2,\,g_3\colon$
\begin{equation}
g_1(x_1,\,x_2)=e^{\sqrt{x_2}},\ \ \ \ x=(x_1,\,x_2)\in\Omega_{\varepsilon}^1,
\end{equation}
\begin{equation}
 g_2(x_1,\,x_2)=\frac{1-\varepsilon^2+x_2}{2-2\varepsilon}e^{1-\varepsilon},\ \ \ \ x=(x_1,\,x_2)\in\Omega_{\varepsilon}^2,
\end{equation}
\begin{equation}
g_3(x_1,\,x_2)=\frac{1-\sqrt{\varepsilon^2+x_1^2-x_2}}{1-\varepsilon}\exp\left(x_1+\sqrt{\varepsilon^2+x_1^2-x_2}-\varepsilon\right),\ \ \ \ x=(x_1,\,x_2)\in\Omega_{\varepsilon}^3.
\end{equation}
Then we only need to check that $g_j=g_{j+1}$ on the $\Omega_{\varepsilon}^j\cap\Omega_{\varepsilon}^{j+1}$.

\subsubsection{Continuity on $\Omega_{\varepsilon}^1\cap\Omega_{\varepsilon}^2$.}
The set $\Omega_{\varepsilon}^1\cap\Omega_{\varepsilon}^2$ is the segment $\left[\left(-1+\varepsilon,\,(1-\varepsilon)^2\right),\,\left(1-\varepsilon,\,(1-\varepsilon)^2\right)\right]$. On this segment, we have 
\begin{equation*}
g_1(x_1,\,x_2)
=
e^{1-\varepsilon}
=
\frac{1-\varepsilon^2+(1-\varepsilon)^2}{2-2\varepsilon}e^{1-\varepsilon}
=
g_2(x_1,\,x_2)
.
\end{equation*}
\subsubsection{Continuity on $\Omega_{\varepsilon}^2\cap\Omega_{\varepsilon}^3$.}
The set $\Omega_{\varepsilon}^2\cap\Omega_{\varepsilon}^3\cap\{x_1\geqslant 0\}$ is the segment 
\begin{equation}\label{seg3}
\left[\left(1-\varepsilon,\,(1-\varepsilon)^2\right),\,\left(1,\,1+\varepsilon^2\right)\right]
.
\end{equation}
The identity $x_1+\sqrt{\varepsilon^2+x_1^2-x_2}=u+\varepsilon$ holds on the segment $\left[\left(u,\,u^2\right),\,\left(u+\varepsilon,\,(u+\varepsilon)^2+\varepsilon^2\right)\right]$ for any $u\geqslant 0$. Hence $g_3$ is linear on~\eqref{seg3}, and $g_2$ is also linear on the segment. We need to check the coincidence of $g_2$ and $g_3$ only at the endpoints. Here the verification is$\colon$
\begin{equation*}
g_2\left(1-\varepsilon,\,(1-\varepsilon)^2\right)
=
e^{1-\varepsilon}
=
g_3\left(1-\varepsilon,\,(1-\varepsilon)^2\right)
,
\end{equation*}
\begin{equation*}
g_2\left(1,\,1+\varepsilon^2\right)
=
\frac{1}{1-\varepsilon}e^{1-\varepsilon}
=
g_3\left(1,\,1+\varepsilon^2\right)
.
\end{equation*}

\section{$C^1$-smoothness of $B_{\varepsilon}$.}\label{differentiability}
In the section, we will check that if $\varepsilon\in(0,\,1)$, then $B_{\varepsilon}\in C^1(\Omega_{\varepsilon}\setminus\{(0,\,0)\})$. From~\eqref{equation28} and~\eqref{equation2} we see that $B_{\varepsilon}$ is a piecewise continuously differentiable function (except the point $(0,\,0)$) and with the results of Section~\ref{continuity} we only need to check that the partial derivatives of $B_{\varepsilon}$ are continuous on $\Omega_{\varepsilon}^j\cap\Omega_{\varepsilon}^{j+1}$. Functions $g_{j,\varepsilon}=B_{\varepsilon}|_{\Omega_{\varepsilon}^j}$ are continuously differentiable (except the point $(0,\,0)$) and $g_{j,\varepsilon}=g_{j+1,\varepsilon}$ on $\Omega_{\varepsilon}^j\cap\Omega_{\varepsilon}^{j+1}$. Moreover $\Omega_{\varepsilon}^j\cap\Omega_{\varepsilon}^{j+1}$ consists of distinct non-vertical segments. Since $B_{\varepsilon}$ is linear on these segments, we only need to check that $\frac{\partial g_{j,\varepsilon}}{\partial x_2}=\frac{\partial g_{j+1,\varepsilon}}{\partial x_2}$ on $\Omega_{\varepsilon}^j\cap\Omega_{\varepsilon}^{j+1}$.
\subsection{The case $\varepsilon\in\left(0,\,\frac{1}{2}\right)$.}
First, we calculate
\begin{equation*}
\frac{\partial\beta}{\partial x_2}
=
\frac{\partial \sqrt{\varepsilon^2+x_1^2-x_2}}{\partial x_2}
=
\frac{-1}{2\sqrt{\varepsilon^2+x_1^2-x_2}}
=
\frac{-1}{2\beta}
.
\end{equation*}
Now, we compute$\colon$
\begin{equation*}
\frac{\partial g_{1,\varepsilon}}{\partial x_2}(x_1,\,x_2)
=
\frac{\partial }{\partial x_2}e^{\sqrt{x_2}}
=
\frac{1}{2\sqrt{x_2}}e^{\sqrt{x_2}};
\end{equation*}
\begin{multline}\label{g2dif}
\frac{\partial g_{2,\varepsilon}}{\partial x_2}(x_1,\,x_2)
=
\frac{\partial }{\partial x_2}\left(
\frac{1+\beta}{1+\varepsilon}\exp\{|x_1|-\beta+\varepsilon\}+\frac{\varepsilon-\beta}{1+\varepsilon}\exp\left\{-\frac{1}{\varepsilon}(|x_1|-\beta)+\varepsilon\right\}
\right)
=
\\
=
\exp\{|x_1|-\beta+\varepsilon\}\left(\frac{-1}{(1+\varepsilon)2\beta}+\frac{1+\beta}{(1+\varepsilon)2\beta}\right)
+
\exp\left\{-\frac{1}{\varepsilon}(|x_1|-\beta)+\varepsilon\right\}\left(\frac{1}{(1+\varepsilon)2\beta}-\frac{\varepsilon-\beta}{\varepsilon(1+\varepsilon)2\beta}\right)
=
\\
=
\exp\{|x_1|-\beta+\varepsilon\}\frac{1}{2+2\varepsilon}+\exp\left\{-\frac{1}{\varepsilon}(|x_1|-\beta)+\varepsilon\right\}\frac{1}{2\varepsilon(1+\varepsilon)}
;
\end{multline}
\begin{equation*}
\frac{\partial g_{3,\varepsilon}}{\partial x_2}(x_1,\,x_2)
=
\frac{\partial }{\partial x_2}\left(c(|x_1|-\alpha)+b(x_2-\alpha^2)+e^{\alpha}\right)
=
b;
\end{equation*}
\begin{multline*}
\frac{\partial g_{4,\varepsilon}}{\partial x_2}(x_1,\,x_2)
=
\frac{\partial }{\partial x_2}\left(
\frac{1-\beta}{1-\varepsilon}\exp\{|x_1|+\beta-\varepsilon\}
\right)
=
\exp\{|x_1|+\beta-\varepsilon\}\left(\frac{1}{(1-\varepsilon)2\beta}-\frac{1-\beta}{(1-\varepsilon)2\beta}\right)
=
\\
=
\exp\{|x_1|+\beta-\varepsilon\}\frac{1}{2-2\varepsilon}
.
\end{multline*}
Hence
\begin{equation*}
\frac{\partial g_{2,\varepsilon}}{\partial x_2}\Big|_{\Omega_{\varepsilon}^1\cap\Omega_{\varepsilon}^2}
=
\exp\{\varepsilon\}\frac{1}{2+2\varepsilon}+\exp\left\{\varepsilon\right\}\frac{1}{2\varepsilon(1+1\varepsilon)}
=
\frac{1}{2\varepsilon}e^{\varepsilon}
=
\frac{\partial g_{1,\varepsilon}}{\partial x_2}\Big|_{\Omega_{\varepsilon}^1\cap\Omega_{\varepsilon}^2}
,
\end{equation*}
\begin{equation*}
\frac{\partial g_{2,\varepsilon}}{\partial x_2}\Big|_{\Omega_{\varepsilon}^2\cap\Omega_{\varepsilon}^3}
=
\exp\{\alpha\}\frac{1}{2+2\varepsilon}+\exp\left\{-\frac{\alpha-\varepsilon}{\varepsilon}+\varepsilon\right\}\frac{1}{2\varepsilon(1+\varepsilon)}
,
\end{equation*}
\begin{equation*}
\frac{\partial g_{4,\varepsilon}}{\partial x_2}\Big|_{\Omega_{\varepsilon}^3\cap\Omega_{\varepsilon}^4}
=
\exp\{\alpha\}\frac{1}{2-2\varepsilon}
.
\end{equation*}
Note that
\begin{multline*}
\frac{\frac{\partial g_{2,\varepsilon}}{\partial x_2}\Big|_{\Omega_{\varepsilon}^2\cap\Omega_{\varepsilon}^3}+\frac{\partial g_{4,\varepsilon}}{\partial x_2}\Big|_{\Omega_{\varepsilon}^3\cap\Omega_{\varepsilon}^4}}{2}
=
\frac{\exp\{\alpha\}\frac{1}{2+2\varepsilon}+\exp\left\{-\frac{\alpha-\varepsilon}{\varepsilon}+\varepsilon\right\}\frac{1}{2\varepsilon(1+\varepsilon)}
+\exp\{\alpha\}\frac{1}{2-2\varepsilon}}{2}
=
\\
=
\exp\{\alpha\}\frac{1}{2-2\varepsilon^2}+\exp\left\{-\frac{\alpha-\varepsilon}{\varepsilon}+\varepsilon\right\}\frac{1}{4\varepsilon(1+\varepsilon)}
=
b
=
frac{\partial g_{3,\varepsilon}}{\partial x_2}
.
\end{multline*}
Thus, to check the differentability of $B_{\varepsilon}$, we only need to verify that $\frac{\partial g_{2,\varepsilon}}{\partial x_2}\Big|_{\Omega_{\varepsilon}^2\cap\Omega_{\varepsilon}^3}=\frac{\partial g_{4,\varepsilon}}{\partial x_2}\Big|_{\Omega_{\varepsilon}^3\cap\Omega_{\varepsilon}^4}$ ($\frac{\partial g_{2,\varepsilon}}{\partial x_2}\Big|_{\Omega_{\varepsilon}^2\cap\Omega_{\varepsilon}^3}$ and $\frac{\partial g_{4,\varepsilon}}{\partial x_2}\Big|_{\Omega_{\varepsilon}^3\cap\Omega_{\varepsilon}^4}$ are constant, so we verify equality of constants, not of functions on different domains). The constant $\alpha$ is defined by~\eqref{alpha}, which may be rewritten as
\begin{equation*}
\exp\left\{(\alpha-\varepsilon)\left(\frac{1+\varepsilon}{\varepsilon}\right)\right\}=\frac{1-\varepsilon}{2\varepsilon^2},
\end{equation*}
or, equivalently,
\begin{equation*}
\frac{1}{2\varepsilon(1+\varepsilon)}\exp\left\{-\frac{1}{\varepsilon}(\alpha-\varepsilon)+\varepsilon\right\}
=
\frac{\varepsilon}{(1-\varepsilon)}e^{\alpha}.
\end{equation*}
Therefore,
\begin{equation*}
\frac{\partial g_{2,\varepsilon}}{\partial x_2}\Big|_{\Omega_{\varepsilon}^2\cap\Omega_{\varepsilon}^3}
=
\frac{1}{2(1+\varepsilon)}e^{\alpha}
+
\frac{1}{2\varepsilon(1+\varepsilon)}\exp\left\{-\frac{1}{\varepsilon}(\alpha-\varepsilon)+\varepsilon\right\}
=
\frac{1}{2(1-\varepsilon)}e^{\alpha}
=
\frac{\partial g_{4,\varepsilon}}{\partial x_2}\Big|_{\Omega_{\varepsilon}^3\cap\Omega_{\varepsilon}^4}.
\end{equation*}

\subsection{The case $\varepsilon\in\left[\frac{1}{2},\,1\right)$.}
Firstly, we calculate
\begin{equation*}
\frac{\partial g_{1,\varepsilon}}{\partial x_2}(x_1,\,x_2)
=
\frac{\partial }{\partial x_2}e^{\sqrt{x_2}}
=
\frac{1}{2\sqrt{x_2}}e^{\sqrt{x_2}};
\end{equation*}
\begin{equation*}
\frac{\partial g_{2,\varepsilon}}{\partial x_2}(x_1,\,x_2)
=
\frac{\partial }{\partial x_2}\left(\frac{1-\varepsilon^2+x_2}{2-2\varepsilon}e^{1-\varepsilon}\right)
=
\frac{1}{2-2\varepsilon}e^{1-\varepsilon};
\end{equation*}
\begin{multline*}
\frac{\partial g_{3,\varepsilon}}{\partial x_2}(x_1,\,x_2)
=
\frac{\partial }{\partial x_2}\left(\frac{1-\sqrt{\varepsilon^2+x_1^2-x_2}}{1-\varepsilon}\exp\left(x_1+\sqrt{\varepsilon^2+x_1^2-x_2}-\varepsilon\right)\right)
=
\\
=
\frac{1}{2-2\varepsilon}\exp\left\{|x_1|+\sqrt{\varepsilon+x_1^2-x_2}-\varepsilon\right\}.
\end{multline*}
Hence
\begin{equation*}
\frac{\partial g_{1,\varepsilon}}{\partial x_2}\Big|_{\Omega_{\varepsilon}^1\cap\Omega_{\varepsilon}^2}
=
\frac{1}{2-2\varepsilon}e^{1-\varepsilon}
=
\frac{\partial g_{2,\varepsilon}}{\partial x_2}\Big|_{\Omega_{\varepsilon}^1\cap\Omega_{\varepsilon}^2}
\end{equation*}
and
\begin{equation*}
\frac{\partial g_{3,\varepsilon}}{\partial x_2}\Big|_{\Omega_{\varepsilon}^2\cap\Omega_{\varepsilon}^3}
=
\frac{1}{2-2\varepsilon}e^{1-\varepsilon}
=
\frac{\partial g_{2,\varepsilon}}{\partial x_2}\Big|_{\Omega_{\varepsilon}^2\cap\Omega_{\varepsilon}^3}
.
\end{equation*}

\section{Local concavity of $B_{\varepsilon}$.}\label{concavity}
Cases $\varepsilon=0$ and $\varepsilon\geqslant 1$ are obvious. So, we can assume that $\varepsilon\in(0,\,1)$. Since we have proved $B_{\varepsilon}\in C(\Omega_{\varepsilon})\cap C^1(\Omega_{\varepsilon}\setminus\{(0,\,0)\})$ we only need to check that each $g_{j,\varepsilon}$ is locally concave, and the local concavity of the function on the hole domain will be obtained automatically (see Proposition 3.1.2 of paper~\cite{r6}).

\subsection{The case $\varepsilon\in\left(0,\,\frac{1}{2}\right)$.}
Since $g_{1,\varepsilon}$ depends only on $x_2$, it suffices to verify that $\frac{\partial^2 g_{1,\varepsilon}}{\partial x_2^2}\leqslant 0$. We compute
\begin{equation*}
\frac{\partial^2 g_{1,\varepsilon}}{\partial^2 x_2}(x_1,\,x_2)
=
\frac{\partial}{\partial x_2}\left(\frac{1}{2\sqrt{x_2}}e^{\sqrt{x_2}}\right)
=
\frac{1}{4x_2}e^{\sqrt{x_2}}-\frac{1}{4x_2\sqrt{x_2}}e^{\sqrt{x_2}}
=
\frac{1}{4x_2}e^{\sqrt{x_2}}\left(1-\frac{1}{\sqrt{x_2}}\right)
.
\end{equation*}
Hence $\frac{\partial^2 g_{1,\varepsilon}}{\partial x_2^2}\leqslant 0$ when $x_2\leqslant 1$, in particular, on $\Omega_{\varepsilon}^1$.

To check the local concavity of $g_{j,\varepsilon}$, for $j\geqslant 2$, we only need to check it in $\Omega_{\varepsilon}^j\cap\{x_1\geqslant 0\}$. The function $g_{3,\varepsilon}\big|_{\Omega_{\varepsilon}^3\cap\{x_1\geqslant 0\}}$ is linear and, in particular, locally concave. We have $g_{4,\varepsilon}\big|_{\Omega_{\varepsilon}^4\cap\{x_1\geqslant 0\}}=B_{\varepsilon}^{asym}|_{\Omega_{\varepsilon}^4\cap\{x_1\geqslant 0\}}$, and the local concavity of $B_{\varepsilon}^{asym}$ defined in~\eqref{asymbel} was checked in~\cite{r3}.

So, we only need to verify the local concavity of $g_{2,\varepsilon}\big|_{\Omega_{\varepsilon}^2\cap\{x_1\geqslant 0\}}$. The function $g_{2,\varepsilon}$ is linear along non-vertical segments $l_u=[\left(u-\varepsilon,\,(u-\varepsilon)^2+\varepsilon^2\right),\,\left(u,\,u^2\right)]=\{x_1-\beta=u-\varepsilon\}$. Moreover,
\begin{equation*}
\frac{\partial g_{2,\varepsilon}}{\partial x_2}\Big|_{l_u}
=
e^u\frac{1}{2+2\varepsilon}+\exp\left(-\frac{1}{\varepsilon}(u-\varepsilon)+\varepsilon\right)\frac{1}{2\varepsilon(1+\varepsilon)}
.
\end{equation*}
This means that $d g_{2,\varepsilon}$ is constant along $l_u$. We only need to check that $\frac{\partial^2 g_{2,\varepsilon}}{\partial^2 x_2}\leqslant 0$. We compute
\begin{multline*}
\frac{\partial^2 g_2}{\partial x_2^2}(x_1,\,x_2)
=
\frac{\partial}{\partial x_2}\left(\frac{\partial g_2}{\partial x_2}\right)(x_1,\,x_2)
=
\\
=
\frac{\partial}{\partial x_2}\left(
\frac{1}{2(1+\varepsilon)}\exp\{x_1-\beta+\varepsilon\}
+
\frac{1}{2\varepsilon(1+\varepsilon)}\exp\left\{-\frac{1}{\varepsilon}(x_1-\beta)+\varepsilon\right\}
\right)
=
\\
=
\frac{1}{4\beta(1+\varepsilon)}\exp\{x_1-\beta+\varepsilon\}
-
\frac{1}{4\varepsilon^2\beta(1+\varepsilon)}\exp\left\{-\frac{1}{\varepsilon}(x_1-\beta)+\varepsilon\right\}
.
\end{multline*}
So, the inequality $\frac{\partial^2 g_{2,\varepsilon}}{\partial^2 x_2}\leqslant 0$ is equivalent to
\begin{equation*}
\frac{1}{4\beta(1+\varepsilon)}\exp\{x_1-\beta+\varepsilon\}
\leqslant
\frac{1}{4\varepsilon^2\beta(1+\varepsilon)}\exp\left\{-\frac{1}{\varepsilon}(x_1-\beta)+\varepsilon\right\}
,
\end{equation*}
which reduces to
\begin{equation*}
\exp\left\{(x_1-\beta)\left(\frac{1+\varepsilon}{\varepsilon}\right)\right\}
\leqslant
\frac{1}{\varepsilon^2}
.
\end{equation*}
On the domain $\Omega_{\varepsilon}^2\cap\{x_1\geqslant 0\}$ we have
\begin{equation*}
\exp\left\{(x_1-\beta)\left(\frac{1+\varepsilon}{\varepsilon}\right)\right\}
\leqslant
\exp\left\{(\alpha-\varepsilon)\left(\frac{1+\varepsilon}{\varepsilon}\right)\right\}
=
\frac{1-\varepsilon}{2\varepsilon^2}
\leqslant
\frac{1}{\varepsilon^2}
.
\end{equation*}

\subsection{The case $\varepsilon\in\left[\frac{1}{2},\,1\right)$.}
Since $g_{1,\varepsilon}$ depends only on $x_2$, to verify local concavity of $g_{1,\varepsilon}$, we need only to check, that $\frac{\partial^2 g_{1,\varepsilon}}{\partial x_2^2}\leqslant 0$. We compute
\begin{equation*}
\frac{\partial^2 g_{1,\varepsilon}}{\partial^2 x_2}(x_1,\,x_2)
=
\frac{\partial}{\partial x_2}\left(\frac{1}{2\sqrt{x_2}}e^{\sqrt{x_2}}\right)
=
\frac{1}{4x_2}e^{\sqrt{x_2}}-\frac{1}{4x_2\sqrt{x_2}}e^{\sqrt{x_2}}
=
\frac{1}{4x_2}e^{\sqrt{x_2}}\left(1-\frac{1}{\sqrt{x_2}}\right)
.
\end{equation*}
Hence $\frac{\partial^2 g_{1,\varepsilon}}{\partial x_2^2}\leqslant 0$ when $x_2\leqslant 1$ and in particular in $\Omega_{\varepsilon}^1$.

The function $g_{2,\varepsilon}$ is linear and, in particular, locally concave.

To check the local concavity of $g_{3,\varepsilon}$ we only need to check it in $\Omega_{\varepsilon}^3\cap\{x_1\geqslant 0\}$. There we have $g_{3,\varepsilon}\big|_{\Omega_{\varepsilon}^3\cap\{x_1\geqslant 0\}}=B_{\varepsilon}^{asym}|_{\Omega_{\varepsilon}^3\cap\{x_1\geqslant 0\}}$, and the local concavity of $B_{\varepsilon}^{asym}$ was checked in the work~\cite{r3}.

\section{Optimizers.}\label{attainability}
If $\langle\varphi\rangle_I=x_1$ and $\left\langle\varphi^2\right\rangle_I=x_1^2$, then $\varphi=x_1$ almost everywhere on $I$. Hence $\langle\exp\{|\varphi|\}\rangle_I=\Bel_{\varepsilon}\left(x_1,\,x_1^2\right)=e^{|x_1|}$. So, we need to constructe the functions $\varphi$ only for $\varepsilon>0$ and $(x_1,\,x_2)\in\Omega_{\varepsilon}\setminus P$.
By symmetry, we need to constructe the optimizers only for $(x_1,\,x_2)\in(\Omega_{\varepsilon}\setminus P)\cap\{x_1\geqslant 0\}$.
\subsection{The case $\varepsilon\in\left(0,\,\frac{1}{2}\right]$.}
\subsubsection{Optimizers on $\Omega_{\varepsilon}^1$.}
On $\Omega_{\varepsilon}^1$ we have $x_2\leqslant\varepsilon^2$, so, the segment $\left[\left(-\sqrt{x_2},\,x_2\right),\,\left(\sqrt{x_2},\,x_2\right)\right]$ lies in $\Omega_{\varepsilon}$. Hence the function $\varphi\colon[0,\,1]\to\mathbb{R}$
\begin{equation}
\varphi(t)=
\begin{cases}
\sqrt{x_2},& \text{for }0\leqslant t\leqslant\frac{x_1+\sqrt{x_2}}{2\sqrt{x_2}} ,   \\
-\sqrt{x_2},&\text{for }\frac{x_1+\sqrt{x_2}}{2\sqrt{x_2}}<t\leqslant 1,
\end{cases}
\end{equation}
belongs to $\BMO_{\varepsilon}$. We have $\left(\langle\varphi\rangle_I,\,\left\langle\varphi^2\right\rangle_I\right)=(x_1,\,x_2)$ and 
\begin{equation*}
B_{\varepsilon}\left(x_1,\,x_2\right)
=
e^{\sqrt{x_2}}
=
\left\langle e^{|\varphi|}\right\rangle_I
.
\end{equation*}

\subsubsection{Optimizers on $\Omega_{\varepsilon}^4$.}\label{subsub312}
Since $e^{|x_1|}\geqslant e^{x_1}$ and for $x_1\geqslant 0$, we have $B_{\varepsilon}=B_{\varepsilon}^{asym}$ on $\Omega_{\varepsilon}^4$. Thus, the examples constructed in~\cite{r3} are suitable. For the point $(x_1,\,x_2)\in\Omega_{\varepsilon}\setminus P$ it is $\varphi\colon[0,1]\to \mathbb{R}$,
\begin{equation}
\varphi(t)=
\begin{cases}
\varepsilon\log\left(\frac{h}{t}\right)+s,& \text{for }0\leqslant t\leqslant h,   \\
s,&\text{for }h\leqslant t\leqslant 1,
\end{cases}
\end{equation}
where $h=1-\frac{1}{\varepsilon}\sqrt{\varepsilon^2+x_1^2-x_2}$ and $s=x_1-\varepsilon h$.

\subsubsection{Optimizers on $\Omega_{\varepsilon}^2$.}\label{subsub313}
The method for constructing the optimizers in the cases like this appeared in~\cite{r1}, see Subsection~$7.3$ of that paper. Consider the function $\varphi\colon[0,\,1]\to\mathbb{R}$ given by$\colon$
\begin{equation}\label{funex}
\varphi(t)=
\begin{cases}
u_1-2\varepsilon, & \text{for }t\in\left[0,\,\frac{\mu\nu}{2}\right), \\
u_1, & \text{for }t\in\left[\frac{\mu\nu}{2},\,\mu\nu\right), \\
u+\varepsilon\log\left(\frac{t}{\mu}\right),& \text{for } t\in[\mu\nu,\,\nu),   \\
u,&\text{for }t\in [\nu,\,1),
\end{cases}
\end{equation}
where
\begin{equation}\label{funex1}
u=x_1+\varepsilon-\sqrt{\varepsilon^2-x_2+x_1^2}=x_1+\varepsilon-\beta,\ \ \ \ 
\mu=\frac{u-x_1}{\varepsilon},\ \ \ \ 
\nu=\exp\left(\frac{u_1-u}{\varepsilon}\right),\text{ and }u\geqslant u_1.
\end{equation}
By Lemma~$7.5$ in~\cite{r1}, $\left(\langle\varphi\rangle,\,\left\langle\varphi^2\right\rangle\right)=(x_1,\,x_2)$, $\varphi\in\BMO_{\varepsilon}$ and 
\begin{equation}\label{formula}
\langle f(\varphi)\rangle_{[0,1]}=\frac{1}{\varepsilon}e^{-\frac{u}{\varepsilon}}
\left(
\frac{f(u_1)-f(u_1-2\varepsilon)}{2}e^{\frac{u_1}{\varepsilon}}+
\int\limits_{u_1}^{u}f'(s)e^{\frac{s}{\varepsilon}}\,ds
\right)(x_1-u)
+f(u)
,
\end{equation}
for any reasonable function $f$.
In our case we substitute $u_1=\varepsilon$ and $f(t)=e^{|t|}$. Then we get 
\begin{multline*}
\left\langle e^{|\varphi|}\right\rangle_{[0,1]}
=
\frac{1}{\varepsilon}e^{-\frac{u}{\varepsilon}}
\left(
\int\limits_{\varepsilon}^{u}e^{s\frac{1+\varepsilon}{\varepsilon}}\,ds
\right)(x_1-u)
+e^u
=
\frac{1}{\varepsilon}e^{-\frac{u}{\varepsilon}}
\frac{e^{s\frac{1+\varepsilon}{\varepsilon}}}{\frac{1+\varepsilon}{\varepsilon}}\Bigg|_{\varepsilon}^u
(x_1-u)
+e^u
=
\\
=
(x_1-u)e^{-\frac{u}{\varepsilon}}
\left(\frac{e^{u\frac{1+\varepsilon}{\varepsilon}}}{1+\varepsilon}-\frac{e^{1+\varepsilon}}{1+\varepsilon}\right)
+e^u
=
e^u\left(1+\frac{x_1-u}{1+\varepsilon}\right)-e^{1+\varepsilon-\frac{u}{\varepsilon}}\frac{x_1-u}{1+\varepsilon}
=
\\
=
\exp(x_1+\varepsilon-\beta)\frac{1+\beta}{1+\varepsilon}-\exp\left(-\frac{x_1+\varepsilon-\beta}{\varepsilon}+1+\varepsilon\right)\frac{\beta-\varepsilon}{1+\varepsilon}
=
B_{\varepsilon}(x_1,\,x_2)
.
\end{multline*}

\subsubsection{Optimizers on $\Omega_{\varepsilon}^3$.}
Let $x=(x_1,\,x_2)\in\Omega_{\varepsilon}^3$. Let $l$ be a line that contains $x$ and does not intersect the set 
\begin{equation*}
\left\{(x_1,\,x_2)\in\mathbb{R}^2\big|\,x_2>x_1^2+\varepsilon^2\right\}
.
\end{equation*}
Then $l$ intersects the segments $\left[\left(\alpha,\,\alpha^2\right),\,\left(\alpha-\varepsilon,\,(\alpha-\varepsilon)^2+\varepsilon^2\right)\right]$ and $\left[\left(\alpha,\,\alpha^2\right),\,\left(\alpha+\varepsilon,\,(\alpha+\varepsilon)^2+\varepsilon^2\right)\right]$. Let those intersection points be $y$ and $z$ respectively. Then, $x=\gamma y+(1-\gamma)z$ for some $\gamma\in[0,\,1]$. By Subsection~\ref{subsub312}, we have a non-increasing function $\varphi_y\in\BMO_{\varepsilon}$ such that 
\begin{equation*}
\left(\langle\varphi_y\rangle_{[0,1]},\,\langle\varphi_y^2\rangle_{[0,1]}\right)=y,\ \ B_{\varepsilon}(y)=\left\langle e^{|\varphi_y|}\right\rangle_{[0,1]},\text{ and }\varphi_y(1)=\alpha
.
\end{equation*}
By Subsection~\ref{subsub313}, we have non-decreasing function $\varphi_z\in\BMO_{\varepsilon}$ such that 
\begin{equation*}
\left(\langle\varphi_z\rangle_{[0,1]},\,\langle\varphi_z^2\rangle_{[0,1]}\right)=z,\ \ B_{\varepsilon}(z)=\left\langle e^{|\varphi_z|}\right\rangle_{[0,1]},\text{ and }\varphi_z(1)=\alpha
.
\end{equation*}
Let $\varphi\colon[0,\,1]\to\mathbb{R}$ be defined as follows.
\begin{equation*}
\varphi(t)=
\begin{cases}
\varphi_z\left(\dfrac{t}{1-\gamma}\right),&\text{for }t\in[0,\,1-\gamma],\\
\varphi_y\left(\dfrac{1-t}{\gamma}\right),&\text{for }t\in(1-\gamma,\,1].
\end{cases}
\end{equation*}
Then $\left(\langle\varphi\rangle_{[0,1]},\,\langle\varphi^2\rangle_{[0,1]}\right)=\gamma y+(1-\gamma)z=x$ and from the linearity of the function $B_{\varepsilon}\big|_{\Omega_{\varepsilon}^3}$ we have
\begin{equation*}
\left\langle e^{|\varphi|}\right\rangle_{[0,1]}
=
\gamma\left\langle e^{|\varphi_y|}\right\rangle_{[0,1]}
+
(1-\gamma)\left\langle e^{|\varphi_z|}\right\rangle_{[0,1]}
=
\gamma B_{\varepsilon}(y)+(1-\gamma)B_{\varepsilon}(z)
=
B_{\varepsilon}(x)
.
\end{equation*}
What is more, $\varphi$ is non-decreasing. Since $[y,\,z]\subseteq \Omega_{\varepsilon}$ it follows from Corollaries $3.12$ and $3.13$ of paper~\cite{r5} that $\varphi\in\BMO_{\varepsilon}$.

\subsection{The case $\varepsilon\in\left[\frac{1}{2},\,1\right)$.}
\subsubsection{Optimizers on $\Omega_{\varepsilon}^1$.}\label{subsub321}
On $\Omega_{\varepsilon}^1$ we have $x_2\leqslant(1-\varepsilon)^2\leqslant\varepsilon^2$, so, the segment $\left[\left(-\sqrt{x_2},\,x_2\right),\,\left(\sqrt{x_2},\,x_2\right)\right]$ lies in $\Omega_{\varepsilon}$. Hence the function $\varphi\colon[0,\,1]\to\mathbb{R}$
\begin{equation}
\varphi(t)=
\begin{cases}
\sqrt{x_2},& \text{for }0\leqslant t\leqslant\frac{x_1+\sqrt{x_2}}{2\sqrt{x_2}} ,   \\
-\sqrt{x_2},&\text{for }\frac{x_1+\sqrt{x_2}}{2\sqrt{x_2}}<t\leqslant 1.   
\end{cases}
\end{equation}
belongs to $\BMO_{\varepsilon}$, and satisfies
\begin{equation*}
\left(\langle\varphi\rangle_I,\,\langle\varphi^2\rangle_I\right)=(x_1,\,x_2),
\end{equation*}
\begin{equation*}
B_{\varepsilon}\left(x_1,\,x_2\right)
=
e^{\sqrt{x_2}}
=
\left\langle e^{|\varphi|}\right\rangle_I
.
\end{equation*}

\subsubsection{Optimizers on $\Omega_{\varepsilon}^3$.}\label{subsub322}
Since $e^{|x|}\geqslant e^x$ and for $x_1\geqslant 0$ we have $B_{\varepsilon}=B_{\varepsilon}^{asym}$ on $\Omega_{\varepsilon}^3$, the examples constructed in~\cite{r3} are suitable. For the point $(x_1,\,x_2)\in\Omega_{\varepsilon}\setminus P$ it is $\varphi\colon[0,1]\to \mathbb{R}$,
\begin{equation}
\varphi(t)=
\begin{cases}
\varepsilon\log\left(\frac{h}{t}\right)+s,& \text{for }0\leqslant t\leqslant h,   \\
s,&\text{for }h\leqslant t\leqslant 1,
\end{cases}
\end{equation}
where $s=1-\frac{1}{\varepsilon}\sqrt{\varepsilon^2+x_1^2-x_2}$ and $h=x_1-\varepsilon s$.

\subsubsection{Optimizers on $\Omega_{\varepsilon}^2$.}
Let $x=(x_1,\,x_2)\in\Omega_{\varepsilon}^2$. Then let $l$ be a line that contains $x$ and does not intersect the set 
\begin{equation*}
\left\{(x_1,\,x_2)\in\mathbb{R}^2\big|\,x_2>x_1^2+\varepsilon^2\right\}
.
\end{equation*}
In this case, $l\cap\Omega_{\varepsilon}^2$ is a segment, let it be $[y,\,z]$. Then, $x=\gamma y+(1-\gamma)z$ for some $\gamma\in[0,\,1]$. By the results of Subsections~\ref{subsub321} and~\ref{subsub322}, we have functions $\varphi_y,\varphi_z\in\BMO_{\varepsilon}$, such that 
\begin{equation*}
\left(\langle\varphi_y\rangle_{[0,1]},\,\langle\varphi_y^2\rangle_{[0,1]}\right)=y,\ \ B_{\varepsilon}(y)=\left\langle e^{|\varphi_y|}\right\rangle_{[0,1]},\ \ \left(\langle\varphi_z\rangle_{[0,1]},\,\langle\varphi_z^2\rangle_{[0,1]}\right)=z,\ \ B_{\varepsilon}(z)=\left\langle e^{|\varphi_z|}\right\rangle_{[0,1]}\text{ and }\varphi_z(1)=\alpha
.
\end{equation*}
Let $\tilde\varphi\colon[0,\,1]\to\mathbb{R}$ be defined as follows$\colon$
\begin{equation*}
\tilde\varphi(t)=
\begin{cases}
\varphi_z\left(\dfrac{t}{1-\gamma}\right),&\text{for }t\in[0,\,1-\gamma],\\
\varphi_y\left(\dfrac{1-t}{\gamma}\right),&\text{for }t\in(1-\gamma,\,1].
\end{cases}
\end{equation*}
Let $\varphi$ be the non-decreasing rearrangement (equimeasurable non-decreasing function) of $\tilde\varphi$. Then $\left(\langle\varphi\rangle_{[0,1]},\,\langle\varphi^2\rangle_{[0,1]}\right)=\gamma y+(1-\gamma)z=x$ and from the linearity of the function $B_{\varepsilon}\big|_{\Omega_{\varepsilon}^3}$ we have:
\begin{equation*}
\left\langle e^{|\varphi|}\right\rangle_{[0,1]}
=
\gamma\left\langle e^{|\varphi_y|}\right\rangle_{[0,1]}
+
(1-\gamma)\left\langle e^{|\varphi_z|}\right\rangle_{[0,1]}
=
\gamma B_{\varepsilon}(y)+(1-\gamma)B_{\varepsilon}(z)
=
B_{\varepsilon}(x)
.
\end{equation*}
Since $[y,\,z]\subseteq \Omega_{\varepsilon}$, it follows from Corollaries $3.12$ and $3.13$ of~\cite{r5} that $\varphi\in\BMO_{\varepsilon}$.

For example, for the point $\left(0,\,\varepsilon^2\right)$, we get the following function$\colon$
\begin{equation*}
\varphi(t)=
\begin{cases}
-\varepsilon\log\left(\frac{2\varepsilon-1}{4\varepsilon t}\right)-1+\varepsilon, & \text{for }t\in\left[0,\,\frac{2\varepsilon-1}{4\varepsilon}\right],\\
-1+\varepsilon,&\text{for }t\in\left[\frac{2\varepsilon-1}{4\varepsilon},\,\frac{1}{2}\right],\\
1-\varepsilon,&\text{for }t\in\left(\frac{1}{2},\,\frac{2\varepsilon+1}{4\varepsilon}\right],\\
\varepsilon\log\left(\frac{2\varepsilon-1}{4\varepsilon (1-t)}\right)+1-\varepsilon, & \text{for }t\in\left[\frac{2\varepsilon+1}{4\varepsilon},\,1\right].
\end{cases}
\end{equation*}

\subsection{The case $\varepsilon\geqslant 1$.}
Since $e^{|x|}\geqslant e^x$ and $B_{\varepsilon}=B_{\varepsilon}^{asym}$ in the case, the examples constructed in~\cite{r3} are suitable. For the point $(x_1,\,x_2)\in\Omega_{\varepsilon}\setminus P$ it is $\varphi\colon[0,1]\to \mathbb{R}$,
\begin{equation}
\varphi(t)=
\begin{cases}
\varepsilon\log\left(\frac{a}{t}\right)+b,& \text{for }0\leqslant t\leqslant a,   \\
b,&\text{for }a\leqslant t\leqslant 1,
\end{cases}
\end{equation}
where $a=1-\frac{1}{\varepsilon}\sqrt{\varepsilon^2+x_1^2-x_2}$ and $b=x_1-\varepsilon a$.

\newpage

\end{document}